\newcommand\PPP{{\mathbb{P}}}
\newcommand\CC{{\mathbb{C}}}
\newtheorem{thm}{Theorem}[section]
\newtheorem{lem}[thm]{Lemma}
\newtheorem{prop}[thm]{Proposition}
\newtheorem{definition}[thm]{Definition}
\newtheorem{corollary}[thm]{Corollary}
\theoremstyle{definition}
\newtheorem{defn}[thm]{Definition}
\newtheorem{rem}[thm]{Remark}
\newtheorem{notation}[thm]{Notation}
\newtheorem{convention}[thm]{Convention}
\theoremstyle{remark}
\numberwithin{equation}{thm}
\title{On Higher Dimensional Extremal varieties of General Type}
\author{Purnaprajna Bangere}
\address{Mathematics Department, 
University of Kansas,
Lawrence, 
USA}
\email{purna@ku.edu}
\author{Jungkai A. Chen}
\address{Department of Mathematics, 
National Taiwan University,  Taipei, Taiwan}
\address{National Center for Theoretical Sciences, Taipei, Taiwan}
\email{jkchen@ntu.edu.tw}
\author{Francisco J. Gallego}
\address{Departamento de \'Algebra,
Geometr\'ia y Topolog\'ia and
Instituto de Matem\'atica Interdisciplinar, Universidad Complutense de Madrid, Madrid,
Spain}
\email{gallego@mat.ucm.es}
\subjclass[2010]{14J10, 14J17, 14J29, 14J32, 14J40, 14J45}
\keywords{Varieties of general type, canonical singularities,
Calabi Yau varieties, Fano varieties, deformations,
moduli, fundamental groups, projective normality}
\thanks{The first author was partially supported by GRF grant of the University of Kansas.
The first author is also grateful for the hospitality of the Departamento de \'Algebra, Geometr\'ia y Topolog\'ia of the Universidad Complutense de Madrid
and for the support of grant MTM2015-65968-P during his visit.
The second author was partially supported by the National Center for Theoretical Sciences
and the Ministry of Science and Technology  of Taiwan.
The third author was partially
supported by grant MTM2015-65968-P and by UCM research group 910772.}
\begin{document}

\begin{abstract} Relations among fundamental
invariants play an important role in algebraic geometry.
 It is known
 that an $n$-dimensional  variety
 of general type with nef canonical divisor and canonical singularities,
 whose image $Y$ under the
 canonical map is of maximal dimension, satisfies $K_X^n \ge 2 (p_g-n)$.
 We investigate the very interesting extremal situation $K_X^n=2(p_g-n)$,
which appears in a number of geometric situations.
Since these extremal varieties are natural higher
 dimensional analogues of Horikawa surfaces,
 we name them Horikawa varieties. These
 varieties have been previously dealt with in
 the works of Fujita \cite{Fujita} and Kobayashi \cite{Kobayashi}.
{We carry out further studies of
{Horikawa} varieties,
 proving new results on various geometric and topological issues concerning them.
 In particular, we prove that {the}
 geometric genus of those Horikawa varieties whose
 image under the canonical map is singular is bounded.
 {We give an analogous result
 for polarized hyperelliptic subcanonical varieties, in particular,
 for polarized Calabi-Yau  and Fano varieties.}
 The pleasing numerology that emerges puts Horikawa's
 result on surfaces in a broader perspective.
 We obtain a structure theorem for Horikawa varieties
 and explore their pluriregularity. We use this to prove
 optimal results on projective normality of pluricanonical
 linear systems. We study the fundamental groups of Horikawa varieties, showing that
 they are simply connected, even if $Y$ is singular.
 We also prove results on deformations of Horikawa varieties,
 {whose implications on the moduli space make
 them the higher dimensional analogue of curves of genus $2$.}}
 \end{abstract}

\maketitle
\section{Introduction}

      For a minimal surface of general type it is a well known result of Noether that its canonical divisor $K_S$ satisfies
      $K_S^2 \ge 2p_g-4$. The surfaces for which $K_S^2 = 2p_g-4$
      have been dealt by Horikawa in his well known work  \cite{Horikawa}.
      Horikawa shows that, for surfaces of general type on the Noether line, $K_S$ is indeed base point free and the complete linear system $|K_S|$
      maps $S$ as a canonical double cover of a surface of minimal degree. These and, more generally, canonical double covers of rational surfaces have a
      ubiquitous presence in the geometry of algebraic surfaces.

  \medskip

   In this article, we prove higher dimensional analogues
   of the results of \linebreak Horikawa.  Some very interesting work has
   been previously done by Fujita and Kobayashi.
  {Fujita and Kobayashi proved
   (see \cite{otherFujita} and
   \cite[Theorem 2.4]{Kobayashi}) that,}
   if $X$ is a variety of general type {with nef canonical divisor and at worst
   canonical  singularities},
   whose image under the
 canonical map
   has maximum dimension, then $K_{X}^n \ge 2(p_g(X)-n)$.
{In
 this article we study, \emph{Horikawa varieties}, i.e.,
 those varieties for which the equality holds:}

 \begin{definition}\label{defi.Horikawa}
 {Let $X$ be a variety of general type of dimension $n$, $n \geq 2$  {with at worst canonical singularities and whose canonical divisor $K_X$ is nef}.
We say that $X$ is a Horikawa variety if
 \begin{enumerate}
  \item the image of the canonical map of $X$ has dimension $n$; and
  \item $K_{X}^n = 2(p_g(X)-n)$.
 \end{enumerate}}
\end{definition}

    {If $X$ is a Horikawa variety of arbitrary dimension,
    then $K_X$ is a Cartier divisor (and, hence, it has Gorenstein singularities) and the linear system $|K_X|$ is base--point--free
    (see  {\cite[Propositions 2.2, 2.5]{Kobayashi}}).}
   {In addition, the canonical morphism of $X$
   is a generically finite
   morphism of degree $2$ onto
    a variety of minimal degree
    {(see \cite[Proposition 2.5]{Kobayashi}}).
    In fact this property characterizes those
    varieties of general type  {with nef canonical divisor and canonical singularities}
    which are Horikawa varieties.}

\medskip

     {In Section 2} we study various geometric and
     {cohomological} aspects of these extremal
     varieties.
     We show that Horikawa varieties are regular
     (see Theorem~\ref{regular}).
    {We also study the deformations of the canonical morphism
    of Horikawa varieties.  We prove that if $X$ is a
    Horikawa variety of any dimension, then the deformation of its,
    degree $2$, canonical morphism are again
 canonical of degree $2$ whose image is a variety of minimal degree.}
 Some of the cases have an overlap with results in \cite{Fujita},
   but, even in those instances, the methods of
    this article are very different and more transparent. There are implications of our results to the moduli space of varieties of general type.
    In this article, we also prove a structure theorem for Horikawa varieties
    {(see Theorem~\ref{regularity}):}
    If $X$ is a Horikawa variety of dimension $n$ and $Y$
    is smooth, then it is pluriregular
    {and, in most cases,} it possesses a
    fibration, with fibers $F$  of general type and $p_g(F)=n$.

    \medskip

    In Section 2, we prove an interesting result (see
    Theorem~\ref{theorem.p_g.bounded})
    which shows that geometric genus of a Horikawa variety of dimension $n$ is bounded if the image under the canonical morphism
    is a singular variety $Y$.
     Precisely, we
     show that  $p_g(X)\leq n+4$. This gives raise
    to a beautiful numerology; indeed, this bound
    generalizes Horikawa's bound in
    the case of surfaces, which is $p_g \leq 6$. In our proof,
    we consider the pullback $X_2$ of the intersection $Y_2$ of $n-2$
    hyperplane sections passing
    through a given point of $Y$.
    The surface $X_2$ maps onto $Y_2$ by a subcanonical linear
    series.
    Even though $X$ has at worst canonical
    singularities, the singularities might a priori
    get worse and worse, since we are taking special hyperplane sections to obtain
    $X_2$. We are able to show that $X_2$ has, nevertheless,
    only canonical
    singularities. This allows us to use in a crucial way the subtle
    arguments developed in the proof of Theorem~\ref{regular} to bound
    the dimension of the projective space containing $Y_2$.

     \medskip
    {In Section 3 we study
    topological aspects of Horikawa varieties.
    Precisely, we show that
   Horikawa varieties  are simply connected (see
   Theorem~\ref{topology}).}
  {Although Fujita proved previously in \cite{Fujita} the simple connectedness of some families of Horikawa varieties, our} proof is very transparent and covers all the cases.  Our methods are quite different from the methods in \cite{Fujita}. We crucially use results of Nori in \cite{Nori}.
    {Even though simple connectedness
   implies regularity, the proof of Theorem~\ref{regular}
   is independent, and necessary because
   of the reasons explained  in the
   paragraph above.}

    \medskip

    The classification of
    Horikawa varieties involves canonical
    covers of varieties of minimal degree.
    The canonical covers of varieties of minimal degree have a significant presence in the geometry of algebraic surfaces and higher dimensional varieties
    of general type and they occur in a variety of contexts such as determination of very ampleness of linear series, ring generation, deformation theory and
    construction of varieties with given invariants (see \cite{Triplecovers}, \cite{Calabi-Yau},
    {\cite{Inventiones}},
\cite{TCMDF}, \cite{DCMDF},  \cite{Horikawa}).
    One of the most natural contexts where the
    canonical covers of minimal degree varieties occur is at the boundary of the geography of surfaces of general type.
    The results in this article and those in \cite{Kobayashi}
    and \cite{Fujita}, show that this is true for all
    higher dimensional varieties of general type as well on what
    can be called the ``Noether faces''.
\medskip

    It is indeed compelling to note the beauty of the
    analogy with lower dimensional results,
    even though methods are different. The analogies are striking,
    despite the existence
    of much worse singularities.
{In view of Theorem~\ref{deformation}, smooth Horikawa varieties can
   be considered as analogues of curves of genus $2$,
   from the point of view of deformations and moduli.
   Moreover,} the implications of our results on classification,
    regularity and deformations of Horikawa varieties
   yield interesting consequences for the moduli space
     of varieties  of general type (see Corollary~\ref{cor.moduli}).

 \medskip

The question of projective normality of pluricanonical systems of a surface of general type has a long history dating back to Kodaira and Bombieri.
In this article we also show that if $X$ is a Horikawa variety of dimension $n$ with an optimal condition on its geometric genus, then $|mK_X|$ embeds $X$ as a projectively normal
variety if and only if  $m\geq n+1$. The standard methods involving Castelnuovo-Mumford regularity and vanishing theorems do not yield this result.
We do in fact show a general statement that follows from these standard methods but, to get the optimal statements, one has to use in an essential way
the structure of Horikawa varieties proved in this article and
{find different methods, as we do in Section 4}.

\medskip

{
Finally, in Section 5 we {show} that the main results of Sections 2 and 3
(precisely,
Theorems~\ref{regular}, \ref{theorem.p_g.bounded}, \ref{regularity} (1)
and \ref{topology}), when considered for strong Horikawa varieties,
fit in a broader setting, that is, they hold for
hyperelliptic subcanonical polarized  varieties. For instance,
if $(X,A)$ is  an $n$-dimensional hyperelliptic,
subcanonical polarized variety
 with
 canonical  singularities, then $X$ is simply connected and, if
 the image of the morphism induced by $|A|$ is singular, then
 $h^0(A) \leq n+4$.
 In particular, this holds if $(X,A)$ is a
    Calabi-Yau or Fano hyperelliptic polarized variety with
    canonical singularities.}

\section{Cohomological properties of Horikawa varieties}

We will use the following conventions and notations throughout the article.

\begin{convention}
 We will work over $\CC$. By a variety we will mean an irreducible variety.
\end{convention}

\begin{notation}\label{notation.section.regularity}
Let $X$ be an algebraic projective normal variety.
\begin{enumerate}
 \item  If
 $D$ is a Cartier divisor on $X$, we will use indistinctly
 the notations $H^0(\mathcal O_X(D))$ and $H^0(X,D)$
 (and  $h^0(\mathcal O_X(D))$ and $h^0(X,D)$) and $|D|$ will denote the
 complete linear series of $D$.

 \item We will denote by $K_X$  the
 canonical divisor of $X$.

 \item If $X$ is a Horikawa variety as in
 Definition~\ref{defi.Horikawa},
 then $\varphi$ will denote the canonical morphism of $X$,
 $Y$ will be the image of $\varphi$ (which is a variety of minimal
 degree) and
 $\overline X$ will denote
 the canonical model of $X$.
\end{enumerate}

\end{notation}

We now explore the topological and cohomological properties of Horikawa
varieties and its applications. It is quite interesting that just one numerical equality gives raise to so much geometry.
We start with a general study of the singularities that might occur in the context
of this article.

\begin{prop}\label{flat}
  If $X$ be a Horikawa variety,
then the Stein factorization of
the canonical morphism
\begin{equation*}
 \varphi: X \longrightarrow Y
\end{equation*}
of $X$  is
{
\begin{equation*}
 X \longrightarrow \overline{X} \overset{\phi}\longrightarrow Y,
\end{equation*}
where $\phi$ is the canonical map of $\overline{X}$, which is
therefore, a morphism.}
In particular, the Stein factorization of $X$
has at worst canonical singularities and $Y$
has at worst {log terminal singularities}.
\end{prop}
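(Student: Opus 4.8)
The plan is to build the Stein factorization by hand and then identify its middle term with the canonical model. Since $|K_X|$ is base--point--free, $\varphi$ is a genuine morphism and $\varphi^*\mathcal O_Y(1)=\mathcal O_X(K_X)$. Write the Stein factorization as $X\xrightarrow{\psi}\overline X'\xrightarrow{\phi}Y$, with $\psi_*\mathcal O_X=\mathcal O_{\overline X'}$ (so $\overline X'$ is normal and $\psi$ has connected fibres) and $\phi$ finite. Because condition (1) of Definition~\ref{defi.Horikawa} forces $\dim Y=n$ and $\phi$ is finite, $\psi$ is a dominant morphism of $n$--folds, hence generically finite; being also connected--fibred it is birational. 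Since $\varphi$ is generically of degree $2$, it follows that $\phi$ is finite of degree $2$. I would also record here that $\psi$ contracts a curve $C$ precisely when $K_X\cdot C=0$, as the finite map $\phi$ contracts no curve.

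Next I would identify $\overline X'$ with $\overline X$. Set $L:=\phi^*\mathcal O_Y(1)$, which is ample on $\overline X'$ (the pullback of an ample line bundle under a finite morphism), and note $\psi^*L=\varphi^*\mathcal O_Y(1)=\mathcal O_X(K_X)$. The projection formula together with $\psi_*\mathcal O_X=\mathcal O_{\overline X'}$ gives $H^0(\overline X',mL)=H^0(X,mK_X)$ for all $m\ge 0$, so the two section rings agree and, $L$ being ample on the normal variety $\overline X'$,
\[
\overline X'=\operatorname{Proj}\bigoplus_{m\ge 0}H^0(\overline X',mL)=\operatorname{Proj}\bigoplus_{m\ge 0}H^0(X,mK_X)=\overline X .
\]
Thus the Stein factorization reads $X\to\overline X\xrightarrow{\phi}Y$ with $\overline X$ the canonical model. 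Finally, from $\psi^*K_{\overline X}=K_X=\psi^*L$ and the injectivity of $\psi^*$ on Picard groups (which holds because $\psi_*\mathcal O_X=\mathcal O_{\overline X}$) I get $K_{\overline X}=L$; hence $\phi$ is defined by $|L|=|K_{\overline X}|$ and is exactly the canonical map of $\overline X$, which is therefore a morphism.

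For the singularities of $\overline X$ I would use crepancy. The equality $\psi^*K_{\overline X}=K_X$ says $\psi$ is crepant, so taking a resolution $\mu\colon W\to X$ and combining $K_W=\mu^*K_X+\sum a_iE_i$ (with $a_i\ge 0$, as $X$ is canonical) with $K_X=\psi^*K_{\overline X}$ shows that every exceptional divisor of the resolution $W\to\overline X$ has nonnegative discrepancy. Hence $\overline X$ has canonical singularities; in particular $K_{\overline X}$ is Cartier and $\overline X$ is Gorenstein, as is standard for canonical models of Gorenstein canonical varieties.

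The last and main point is that $Y$ has log terminal singularities, and I expect this to be the real obstacle. Here I would invoke the behaviour of singularities under finite morphisms: $\phi\colon\overline X\to Y$ is finite, $\overline X$ is klt (being canonical), and $Y$, as a variety of minimal degree, is $\mathbb Q$--Gorenstein; the descent result for finite covers (in the spirit of Koll\'ar--Mori, Proposition~5.20) then forces $Y$ to be klt. The delicate part is ensuring that $K_Y$ is $\mathbb Q$--Cartier, so that ``log terminal'' is meaningful, and controlling the ramification divisor $R$ in $K_{\overline X}=\phi^*K_Y+R$. The cleanest route seems to be to combine the double--cover formula $K_{\overline X}=\phi^*(K_Y+L')$, where $2L'\equiv B$ for the branch divisor $B$, with the del Pezzo--Bertini description of $Y$ as (a cone over) a smooth variety of minimal degree, from which the $\mathbb Q$--Gorenstein property and the discrepancy comparison can be read off directly.
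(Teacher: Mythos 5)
Your handling of the main body of the proposition is correct and follows the same strategy as the paper, executed more by hand: where the paper identifies the Stein factorization with $\operatorname{Proj}$ of the canonical ring by citing the semiample fibration theorem [PAG, Theorem 2.1.15], you obtain the identification from $\psi_*\mathcal O_X=\mathcal O_{\overline X'}$, the projection formula, and the fact that a normal projective variety is recovered as $\operatorname{Proj}$ of the section ring of any ample line bundle; you also supply the discrepancy computation showing $\overline X$ is canonical, which the paper merely asserts. One presentational caveat: to write $\psi^*K_{\overline X}$ at all you must already know $K_{\overline X}$ is $\mathbb Q$-Cartier, so it is cleaner to first get $K_{\overline X}=\psi_*K_X=\psi_*\psi^*L=L$ (using that the locus where the birational morphism $\psi$ fails to be an isomorphism has image of codimension $\geq 2$ in $\overline X'$) and then read off crepancy, rather than invoking crepancy of the canonical model to prove $K_{\overline X}=L$.

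The genuine gap is in your final paragraph. You correctly sense that $\mathbb Q$-Cartierness of $K_Y$ is the delicate point, but the repair you propose --- that the del Pezzo--Bertini classification lets one ``read off the $\mathbb Q$-Gorenstein property'' of $Y$ --- is false. If $Y$ is a cone over a two-dimensional scroll $\mathbb F_e$ embedded by $|C_0+mf|$ (which nothing at this stage of the paper excludes once $n\geq 3$), the local class group at the vertex is $\operatorname{Cl}(\mathbb F_e)/\mathbb Z[C_0+mf]$, and $K_Y$ is $\mathbb Q$-Cartier if and only if $K_{\mathbb F_e}=-2C_0-(e+2)f$ is $\mathbb Q$-proportional to $C_0+mf$, i.e.\ if and only if $2m=e+2$; since very ampleness forces $m\geq e+1$, this happens only for the quadric cone $(e,m)=(0,1)$. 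So in general $K_Y$ is \emph{not} $\mathbb Q$-Cartier, and ``log terminal'' cannot hold in the absolute sense you are aiming for. The way to make the last claim precise --- and what the paper's citation of [KM, Proposition 5.20] implicitly amounts to --- is the pair version: letting $B$ be the (reduced) branch divisor of $\phi$, Riemann--Hurwitz for the double cover gives $K_{\overline X}=\phi^*(K_Y+\tfrac12 B)$ as Weil divisor classes; pushing forward (for the finite degree-$2$ map, $\phi_*\phi^*$ is multiplication by $2$ on divisor classes) and using $\mathcal O_{\overline X}(K_{\overline X})=\phi^*\mathcal O_Y(1)$ yields $K_Y+\tfrac12 B\sim_{\mathbb Q}\mathcal O_Y(1)$, which \emph{is} Cartier. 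Hence [KM, Proposition 5.20] applies to the pairs $(\overline X,0)$ and $(Y,\tfrac12 B)$: since $\overline X$ is canonical, hence klt, the pair $(Y,\tfrac12 B)$ is klt, and this --- the existence of a boundary making $Y$ klt --- is the sense in which ``$Y$ has at worst log terminal singularities'' holds. With this replacement your argument is complete.
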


\begin{proof}
{Since $|K_{{X}}|$ is
base-point-free by \cite[Propositions 2.2, 2.5]{Kobayashi}},
it is known (see \cite[Theorem 2.1.15]{PAG}) that $|lK_{{X}}|$
 gives the Stein factorization $\widehat{X}$ of the canonical
 morphism of $X$,
 {for any
 sufficiently large integer
 $l$.  Then, for a suitable $l' \in \mathbb N$}
$$\mathrm{Proj}\Big(\bigoplus_{m \geq 0} H^0({X}, mK_{ X})\Big) =
{\mathrm{Proj}\Big(\bigoplus_{m \geq 0}
H^0({X}, l'mK_{ X})\Big)}
= \widehat{X},$$
{so the canonical model $\overline{X}$ of
${X}$ is nothing but $\widehat{X}$ and
$\phi$ is induced by $|K_{\overline X}|$.}
Therefore, $\widehat{X}$ has at worst canonical singularities.
It is a  well-known fact that Kawamata log terminal singularities
are preserved under finite maps (cf. \cite[Proposition 5.20]{KM}),
hence it follows that $Y$ has at worst {log terminal singularities}.
\end{proof}

\begin{notation}\label{notation.extra}
 {Given a Horikawa variety, we will
 keep the name $\phi$ used
 in Proposition~\ref{flat}
 for the canonical morphism of $\overline X$.}
\end{notation}

Recall (see \cite[Propositions 2.5]{Kobayashi})  that the canonical morphism
of a Horikawa variety is generically of degree $2$ onto its image. We then
say that a Horikawa variety is a \emph{strong Horikawa variety}
 if its canonical morphism is finite.
Proposition~\ref{flat} implies the following corollary for
strong Horikawa varieties:

\vskip -.5cm

{
\begin{corollary}\label{cor.strong.Horikawa}
A Horikawa variety $X$ is a strong Horikawa variety
if and only if its canonical model
is  $X$. In particular, the canonical model of a
Horikawa variety is a strong Horikawa variety.
\end{corollary}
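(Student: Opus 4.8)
The plan is to read the statement off the factorization of the canonical morphism supplied by Proposition~\ref{flat}. Write $\varphi = \phi \circ g$, where $g \colon X \to \overline{X}$ is the first map of the Stein factorization (a proper birational morphism onto the canonical model $\overline{X}$) and $\phi \colon \overline{X} \to Y$ is the finite second map. Since $\phi$ is finite in all cases, the content is concentrated entirely in the map $g$, and the organizing idea is the chain of equivalences
\[
X \text{ strong Horikawa} \iff \varphi \text{ finite} \iff g \text{ finite} \iff g \text{ an isomorphism} \iff \overline{X} = X.
\]

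The middle equivalences are where I would spend the effort. For ``$\varphi$ finite $\iff$ $g$ finite'': if $g$ is finite then $\varphi = \phi \circ g$ is a composition of finite morphisms, hence finite; conversely, if $\varphi$ is finite, then $g(x)=g(x')$ forces $\varphi(x)=\phi(g(x))=\varphi(x')$, so each fiber of $g$ sits inside a fiber of $\varphi$ and is therefore finite, and a proper morphism with finite fibers is finite. For ``$g$ finite $\iff$ $g$ an isomorphism'', the key inputs are that $g$ is birational (as $X$ and $\overline{X}$ share the canonical ring), proper and surjective, and that $\overline{X}$ is normal (its canonical singularities, guaranteed by Proposition~\ref{flat}, are normal). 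Zariski's Main Theorem then forces a proper birational surjection with finite fibers onto a normal variety to be an isomorphism, which is exactly the assertion $\overline{X}=X$. This is the step I expect to demand the most care: checking cleanly that $g$ is genuinely birational and that the normality hypothesis is in place, so that Zariski's Main Theorem applies without gaps.

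For the concluding ``in particular'' clause I would first verify that the canonical model $\overline{X}$ is itself a Horikawa variety, and then apply the equivalence to it. Indeed, $\overline{X}$ has canonical singularities with $K_{\overline{X}}$ ample, hence nef; since $g$ is crepant and birational one has $K_{\overline{X}}^n = K_X^n$ and $p_g(\overline{X}) = p_g(X)$, so the extremal equality $K_{\overline{X}}^n = 2(p_g(\overline{X})-n)$ is inherited, and the canonical image of $\overline{X}$ is the same $n$-dimensional variety $Y$. Hence $\overline{X}$ is a Horikawa variety, and because the canonical model of $\overline{X}$ is $\overline{X}$ itself, the equivalence just proved shows that $\overline{X}$ is strong.
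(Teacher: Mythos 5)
Your proof is correct and follows essentially the same route as the paper, which states the corollary as an immediate consequence of Proposition~\ref{flat}: the Stein factorization $X \to \overline{X} \xrightarrow{\phi} Y$ has finite second factor, so finiteness of $\varphi$ is equivalent to the first factor being finite, hence (being proper, birational, with connected fibers onto the normal variety $\overline{X}$) an isomorphism. The paper leaves these details implicit; your write-up simply makes them explicit, including the verification that $\overline{X}$ is itself a Horikawa variety, which the paper also takes for granted.
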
}

{The irregularity
is an important topological invariant of any variety, so
we study it in Theorem~\ref{regular}
for any Horikawa variety.}
{In our proof we make
 a reduction to the case of algebraic surfaces. However, the linear series involved is not canonical,
 unlike in the study of algebraic surfaces made by Horikawa.
Moreover, the singular case has to be handled carefully using a resolution of singularities.}
As a byproduct of our proof, we do obtain a more transparent proof of
Horikawa's results on surfaces as well.

\smallskip

{Although Theorem~\ref{regular} also follows
from Theorem~\ref{topology}, we give an independent proof below.
Among other things, the interest of this proof lies on the arguments employed.
These arguments are used in a crucial way in the proof of
Theorem~\ref{theorem.p_g.bounded}. For example, we build on them to show
that, for the intersection of $n-2$ hyperplane section of $Y$, which is
a Hirzebruch surface $\mathbb F_e$, $e$ is bounded. This is a key part in proving
the boundedness of $p_g$ for singular $X$.}

\begin{thm}\label{regular}
A  Horikawa variety  $X$ is regular.
\end{thm}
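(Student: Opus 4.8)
The plan is to reduce the vanishing of $q(X)=h^1(\mathcal{O}_X)$ to a statement about a surface obtained by cutting with general hyperplane sections, and then to exploit the double cover structure of the canonical morphism. First I would pass to the canonical model: by Proposition~\ref{flat} and Corollary~\ref{cor.strong.Horikawa}, the canonical model $\overline X$ is a strong Horikawa variety whose canonical morphism $\phi\colon \overline X\to Y$ is finite of degree two onto a variety of minimal degree, and $\overline X$ has canonical singularities. Since canonical singularities are rational and $h^1(\mathcal{O}_{\bullet})$ is a birational invariant among varieties with rational singularities (compare both $X$ and $\overline X$ to a common resolution), we have $h^1(\mathcal{O}_X)=h^1(\mathcal{O}_{\overline X})$, so it suffices to prove $q(\overline X)=0$. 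Set $D=K_{\overline X}$, which is ample, Cartier and base-point-free.

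Next I would cut down to a surface. Let $X_1\in|D|$ be general and, inductively, $X_{k+1}\in\bigl|D|_{X_k}\bigr|$ general, so that $S:=X_{n-2}$ is a surface (for $n=2$ the cutting is vacuous and $S=\overline X$). By Bertini for base-point-free systems each $X_k$ again has at worst canonical singularities, adjunction gives $K_{X_k}=(k+1)\,D|_{X_k}$, and $D|_{X_k}$ is nef and big because it is the pullback of the ample hyperplane class under the generically finite morphism from $X_k$ onto its minimal degree image $Y_k$. From the sequences $0\to\mathcal{O}_{X_k}(-D)\to\mathcal{O}_{X_k}\to\mathcal{O}_{X_{k+1}}\to 0$, together with Serre duality and Kawamata--Viehweg vanishing applied to $\omega_{X_k}\otimes\mathcal{O}(D|_{X_k})=\mathcal{O}((k+2)D|_{X_k})$, one gets $H^1(X_k,-D|_{X_k})=H^2(X_k,-D|_{X_k})=0$ as soon as $\dim X_k\ge 3$, hence $H^1(\mathcal{O}_{X_k})\cong H^1(\mathcal{O}_{X_{k+1}})$. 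Chaining these isomorphisms yields $q(\overline X)\cong h^1(\mathcal{O}_S)$. Note that, unlike in Horikawa's surface case, the adjunction relation $K_S=(n-1)\,D|_S$ shows that $S$ maps to its minimal degree image by a strictly \emph{sub}canonical system when $n\ge 3$; this twist is harmless for what follows.

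Finally I would compute $q(S)$ through the induced double cover $\pi\colon S\to\Sigma$, where $\Sigma$ is the surface of minimal degree cut out on $Y$. In the case where $\Sigma$ is smooth, $\pi$ is finite of degree two and, since $S$ is Cohen--Macaulay and $\Sigma$ is regular, $\pi$ is flat, so $\pi_*\mathcal{O}_S=\mathcal{O}_\Sigma\oplus\mathcal{L}^{-1}$ with $\mathcal{L}=\mathcal{O}_\Sigma(n-1)\otimes\omega_\Sigma^{-1}$; the last identity comes from $K_S=\pi^*(K_\Sigma+\mathcal{L})=(n-1)\pi^*\mathcal{O}_\Sigma(1)$. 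Therefore $q(S)=h^1(\mathcal{O}_\Sigma)+h^1(\Sigma,\mathcal{L}^{-1})$. As $\Sigma$ is a rational surface, $h^1(\mathcal{O}_\Sigma)=0$; and by Serre duality $h^1(\Sigma,\mathcal{L}^{-1})=h^1(\Sigma,\mathcal{O}_\Sigma(n-1))^{*}$, which vanishes because varieties of minimal degree are arithmetically Cohen--Macaulay, so their intermediate cohomology vanishes in every twist. Hence $q(S)=0$ and $X$ is regular; specializing to $n=2$ recovers Horikawa's computation for surfaces.

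The main obstacle is the singular case, that is, when the minimal degree image is a cone. Then $\Sigma$ need not be Gorenstein at its vertex, $\mathcal{O}_\Sigma(1)$ may fail to be Cartier there, $\pi$ need not be flat, and the clean Serre duality computation above breaks down. To handle this I would resolve the vertex, $g\colon\mathbb{F}_e\to\Sigma$, pull the double cover back to the smooth scroll $\mathbb{F}_e$, and carry out the analogous computation there, using that the vertex is a rational singularity (hence contributes nothing to $h^1(\mathcal{O})$); the required vanishing $h^1\bigl(\mathbb{F}_e,(n-1)H\bigr)=0$, with $H$ the pulled-back polarization, is then a direct computation along the ruling. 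Keeping precise track of how the branch data and the singularities of $S$ interact with this resolution is the delicate point, and it is exactly the argument that will be reused to bound $e$, and ultimately $p_g$, in Theorem~\ref{theorem.p_g.bounded}.
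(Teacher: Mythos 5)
Your overall route coincides with the paper's: pass to the canonical model $\overline X$, cut down to a surface with the restriction sequences plus Kawamata--Viehweg (the paper uses the same sequences, \cite[Lemma 6.6]{CKM} for canonicity of the cuts), and analyze the induced degree-two cover of a surface of minimal degree. Your treatment of the smooth case is complete and correct (the paper gets the vanishing of $h^1(\mathcal{L}^{-1})$ by Leray along the ruling rather than by quoting that minimal degree surfaces are ACM, but these are interchangeable).

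The gap is in the cone case, which is precisely where the paper does most of its work. You assert that after pulling the cover back to the minimal resolution $g\colon\mathbb{F}_e\to\Sigma$ ``the required vanishing is $h^1\bigl(\mathbb{F}_e,(n-1)H\bigr)=0$.'' That identification of the relevant line bundle presupposes that the induced proper birational morphism $\overline q\colon Z\to S$ (with $Z$ the normalized pulled-back cover, $p\colon Z\to\mathbb{F}_e$) is \emph{crepant}. A priori one only knows $K_Z=\overline q^{*}K_S+aF$ with $a\ge 0$ and $F$ supported over the vertex; then, writing $p_*\mathcal{O}_Z=\mathcal{O}_{\mathbb{F}_e}\oplus\mathcal{M}^{-1}$, one gets $K_{\mathbb{F}_e}\otimes\mathcal{M}=\mathcal{O}_{\mathbb{F}_e}\bigl((n-1+a')C_0+(n-1)ef\bigr)$ where $a=2a'$ (the parity itself requires an argument: one must show $\mathcal{O}_Z(aF)$ is a pullback and use linear equivalence on $\mathbb{F}_e$). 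If $a>0$ the vanishing you need actually \emph{fails}: pushing $(n-1+a')C_0+(n-1)ef$ forward to $\mathbb{P}^1$ produces a summand $\mathcal{O}_{\mathbb{P}^1}(-a'e)$ with $-a'e\le -2$, which contributes positively to $h^1$, and by Serre duality this is exactly $h^1(\mathcal{M}^{-1})=h^1(\mathcal{O}_Z)$. So crepancy is not a bookkeeping detail to be ``kept track of''; it is the crux, and your proposal defers exactly this point. The paper proves $a=0$ (and, as a byproduct, $e\le 4$) by a case analysis on $\phi_2^{-1}(\text{vertex})$: if it is a single point, an adjunction computation on $F$ first forces $C_0$ into the branch locus (so $p^{*}C_0=2F$, $F^2=-e/2$), then the parity argument plus nonnegativity of the intersection of $F$ with the residual ramification divisor gives $(a+1)e\le 4$, hence $a=0$; if it consists of two points, the cover is \'etale near the vertex, $p^{*}C_0=E_1+E_2$, and adjunction on the two $(-e)$-curves forces $a=0$, $e=2$ outright. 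Without this analysis, your ``direct computation along the ruling'' is a computation with the wrong line bundle, and the proof of the singular case does not close.
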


\begin{proof}
 If we show that $\overline{X}$ is regular, then so
is $X$.
From {Corollary~\ref{cor.strong.Horikawa} and}
\cite[Proposition 2.5]{Kobayashi},
we know that the base point free
complete canonical linear series
$|K_{\overline{X}}|$
induces a finite morphism
$\phi \colon \overline{X} \rightarrow
Y\subset \mathbb P^{N}$, of degree
$2$ onto a variety of minimal degree $Y$.
By taking $(n-2)$ general
hyperplane sections of $Y$ and then taking their
respective pullbacks to $\overline X$
we end up with a  {finite}
morphism $\phi_2: \overline X_2
\longrightarrow Y_2$, of degree $2$, where,
{by
\cite[Lemma 6.6]{CKM}},
$\overline X_2$ has canonical singularities
and $Y_2$ is a
surface of minimal degree.
We divide the situation into two cases:
when $Y_2$ is smooth and when $Y_2$ is singular.

\medskip

\noindent
{\bf Case 1.} $Y_2$ is a smooth surface of minimal degree.
\smallskip

Then $Y_2$ is either $\mathbb P^2$, the Veronese surface in $\mathbb P^5$ or a  Hirzebruch surface embedded as a scroll.
Since $\phi_{2}$ is flat,
${\phi_{2}}_{\ast}\mathcal{O}_{\overline{X}_2}=\mathcal{O}_{Y_{2}}\oplus
\mathcal{L}^{-1}$, where $\mathcal{L}^{\otimes2}=\mathcal{O}_{Y_{2}}(B)$,
with $B$ being the
branch divisor of the double cover.

\smallskip

First let $Y_2$ be a Hirzebruch surface embedded as scroll.
Then $Y_2$ is  embedded by a very ample linear series $|C_0+mf|$, where $C_0$ is a minimal section of $Y_2$. Then  $m>e$, where $C_0^2=-e$ and $e$ is the invariant of the Hirzebruch surface.
In this case note that, by the ramification formula for double covers, we have
\begin{equation*}
 \mathcal O_{\overline X_2}(K_{\overline{X}_2})=
\phi_{2}^{*}(\mathcal O_{Y_2}(K_{Y_2}) \otimes \mathcal L).
\end{equation*}
By adjunction we also have
\begin{equation*}
\mathcal O_{\overline X_2}(K_{\overline{X}_2})=
\phi_{2}^{*}(\mathcal O_{Y_2}(n-1)).
\end{equation*}
Since we are working on a rational surface,
{
\begin{equation}\label{ramification}
\mathcal O_{Y_2}(K_{Y_2})\otimes \mathcal L = \mathcal O_{Y_2}((n-1)C_{0}
+((n-1)m)f)
\end{equation}}
with $m>e$.
We
now show that $\overline{X}_{2}$ is regular in this case.
We have  $$H^1(\mathcal{O}_{\overline{X}_2}) = H^1(\mathcal{O}_{Y_2} )
\oplus H^1(\mathcal{L}^{-1}) =0,$$ because $Y_2$ is regular
and $H^1(Y_2, {\mathcal{O}_{Y_2}(K_{Y_2}) \otimes \mathcal{L}})$
vanishes (the latter follows from \eqref{ramification} and from  the
Leray spectral sequence applied to the projection from
the ruled surface $Y_2$ to $\mathbb{P}^1$).
Therefore $\overline
{X}_{2}$ is regular.

\smallskip

The case that $Y_2$ is $\mathbb{P}^2$ or the Veronese surface in $\mathbb{P}^5$ can be treated similarly.

\medskip
\noindent
{\bf Case 2.}  $Y_{2}$ is a singular surface of
minimal degree. \\
In this case $Y_{2}$ is a cone over a rational curve of degree $e \geq 2$. Let
$q \colon W_{2} \rightarrow Y_{2}$ be the minimal desingularization of $Y_2$. There
exists the following commutative diagram:

\begin{equation}\label{CD}
\begin{CD}
Z_{2} & @>{\overline{q}}>>  & \overline{X}_2\\
@Vp_2VV &  & @VV{\phi_2}V\\
W_{2} & @>q>> &  Y_{2},
\end{CD}
\end{equation}
where ${Z_{2}}$
is the normalization of the reduced part of the fiber
product $W_2 \times_{Y_{2}} \overline{X}_{2}$, which is irreducible.
The map $p_2 \colon Z_2 \to W_2$, which is a finite map of degree $2$,
and $\overline{q} \colon Z_2 \to \overline{X}_{2}$,
which is a birational map, are induced by the projections from the fiber product onto
each factor.
Since $W_2$ is smooth
and ${Z}_{2}$ is normal, $p_2$ is a flat morphism of degree 2,
and
$Z_{2}$ is Gorenstein.  Moreover,
${p_2}_{\ast}(\mathcal{O}_{{Z}_{2}})=\mathcal{O}_{W_{2}}\oplus
\mathcal{L}^{-1}$ where now $\mathcal{L}^{\otimes2}=\mathcal{O}_{W_{2}}(B)$ and
$B$ is the branch divisor of the double cover.
By adjunction
\begin{equation}\label{eq.pullback.adjunction}
 \phi_{2}^{\ast}(\mathcal{O}_{Y_{2}}(n-1))=
 \mathcal O_{\overline{X}_{2}}(K_{\overline{X}_{2}}).
\end{equation}
Let $y \in Y_{2}$
 be the
vertex of the cone. The surface $W_{2}$ is a Hirzebruch surface
with the minimal section $C_{0}^{2}=-e$, where $C_0=q^{-1}(y)$.
Let $F:=p_2^{-1}(C_0)$.
  There are two possible cases for $\phi_{2}^{-1}(y),$
either it consists of one point or it consists of two points.

\medskip

\noindent
{\bf Case 2.1.} $\phi_{2}^{-1}(y)$ is one point.

\smallskip
First we show $C_0$ is in the branch locus of $p_2$. Suppose the contrary.
If $C_{0}$ is not contained  in the branch locus of $p_2$, then $p_2^{\ast}C_{0}=F$ and $F^{2}=-2e$.
There exist
canonical
divisors $K_{Z_2}$ and $K_{\overline{X}_2}$
and a
nonnegative {integer} $a$
such that
$K_{Z_2}=\overline{q}^{\ast} K_{\overline{X}_2}+aF$.
Applying adjunction we have
\[
(K_{{Z_{2}}}+F)\cdot F=(
\overline{q}^{\ast}K_{{\overline X_2}}+(a+1)F)\cdot
F=-2e(a+1) \leq  -4,
\]
and this is impossible because $F$ is a reduced and
connected curve.

\medskip
Therefore  the minimal section  $C_0$ is in the branch locus of
$p_2$.
{
Since $C_0$ is in the branch locus of $p_2$, we have that
$F$ is isomorphic to $\mathbb P^1$}, that
$p_2^{\ast}(C_{0})=2F$ and that $F^{2}=-\frac{e}{2}$.
{We also have
$K_{{Z_{2}}}=\overline{q}^{\ast}(K_{\overline X_2})+aF$, with
$a$ nonnegative because $\overline X_2$ has canonical
singularities.}
{By \eqref{CD} and \eqref{eq.pullback.adjunction} we obtain,}
\begin{equation}\label{equation4}
K_{{Z_{2}}}=\overline{q}^{\ast}(K_{\overline X_2})+aF =
\overline{q}^{\ast} \phi_2^* \mathcal{O}_{Y_2}(n-1)+aF= p_2^*q^* \mathcal{O}_{Y_2}(n-1)+aF.
\end{equation}
Comparing \eqref{equation4}
with
\begin{equation*}
\mathcal O_{Z_2}(K_{Z_2})=p_2^*(\mathcal O_{W_2}(K_{W_2})
\otimes \mathcal{L}),
\end{equation*}
one sees that $\mathcal{O}_{Z_2}(aF) = p^*N$ for some line bundle $N$ on $W_2$.
Therefore, $p_2^*N^{\otimes 2} = \mathcal{O}_{Z_2}(2aF) =p_2^*\mathcal{O}_{W_2}(aC_0)$.
This implies in particular that $N^{\otimes 2}$ and $aC_0$ are numerically equivalent.
Since $W_{2}$ is a rational ruled surface, $N^{\otimes 2}$ and $aC_0$
are linearly equivalent. Therefore
$N \sim \mathcal{O}_{W_{2}}(a' C_{0})$ with
$2a'=a$, where $a'$ is an integer. Thus $a$ is a nonnegative even integer.
On the other hand, we have
\begin{equation}\label{ramification.formula}
K_{{Z_{2}}}= p_2^*K_{W_2}+R
\sim p_2^{\ast}(-2C_{0}-(e+2)f)+R,
\end{equation}
where $R$ is the ramification divisor of $p_2$.
From \eqref{equation4} and \eqref{ramification.formula}
we get
\begin{equation}\label{equation6}
 R\sim p_2^{\ast}((n+1)C_{0}
+(ne+2)f)+aF \sim p_2^{\ast}((ne+2)f)+(2n+2+a)F.
\end{equation}
 Since $Z_2$ is normal, $R$ can be written as
 $R=R_{1}+F$, where $R_{1}$ is a divisor  that does not
contains $F$ in its support.
 \begin{equation}\label{bound1}
 (a+1)e\leq 4.
 \end{equation}
 Since $a$ is even and $e\geq 2$, we have
 \begin{equation}\label{eq.for.boundedness1}
 a=0 \ \textrm{and} \  e=2, 3 \ \textrm{or} \ 4.
 \end{equation}
In particular, $\overline{q}$ is crepant.
{Then we have (see \eqref{equation6})}
\begin{equation*}
 R\sim p_2^{\ast}((n+1)C_{0}
+(ne+2)f).
\end{equation*}
Now, the ramification {formula} for $p_2$ reads
\begin{equation*}
 \mathcal O_{Z_2}(K_{Z_2})=p_2^*(\mathcal O_{W_2}(K_{W_2}) \otimes
\mathcal{L}),
\end{equation*}
 with
\begin{equation}\label{formula.branch.singular.case}
\mathcal{L}=\mathcal O_{W_2}((n+1)C_0+(ne+2)f).
\end{equation}
Since ${p_2}_{\ast
}(\mathcal{O}_{{Z_{2}}})=\mathcal{O}_{W_2}\oplus{\mathcal{L}^{-1}}$,
it follows from projection formula and duality that
$$h^1(\mathcal{O}_{Z_2})=
h^{1}({\mathcal{L}^{-1}}) =
h^{1}(\mathcal O_{W_2}((n-1)(C_{0}+ef)))=0.$$
Thus $Z_2$ is regular and hence so is $\overline{X}_{2}$
in this case.

\medskip

\noindent
{\bf Case 2.2.}
$\phi_{2}^{-1}(y)=\{x_{1},x_{2}\}$ are two distinct points.

\smallskip
 {{This implies that
$\phi_{2}$ is \'etale in the analytic neighborhood of $x_{1}$ and $x_{2}$.
Also, $p_2$ is \'etale on an analytic neighborhood of $C_{0}$.}}
 Let $E_{1}$ and
$E_{2}$ be exceptional divisors of $\overline{q}.$
We have $p_2^{\ast}
(C_{0})=E_{1}+E_{2},$ $E_{1}\cdot E_{2}=0$ and $E_{i}^{2}=-e=C_{0}^{2}$, with
$E_{i}\cong \mathbb{P}^{1}$.
Recall
$Z_2$ is Gorenstein as explained above.
We then have
\begin{equation*}
 K_{Z_2}=\overline{q}^{\ast}K_{\overline X_2}+a(E_{1}+E_{2}),
\end{equation*}
{with $a$ nonnegative.}
Applying
the adjunction formula we obtain:
\begin{equation}\label{bound2}
 -2=(K_{{Z_{2}}}+E_{i})\cdot E_{i}=(\overline{q}^{\ast}K_{X_2}
+a(E_{1}+E_{2})+E_{i})\cdot E_{i}=-e(a+1).
\end{equation}
Since $e\geq2$, then
\begin{equation}\label{eq.for.boundedness2}
 a=0 \ \textrm{and} \  e=2.
\end{equation}
This means that
$\overline{q}$ is crepant.
By a similar computation as in Case 2.1,
we have $H^{1}(\mathcal{O}_{\overline{X}_{2}})=0$.

\medskip

We thus conclude that $\overline{X}_{2}$
is regular in all cases.

\smallskip

{Since $\overline{X}_{2}$ is regular, so is $X_2$. Now we
prove  the Horikawa variety $X$ is regular.
Recall that ${X}_{2}$ is
obtained
as a complete intersection of members of the linear system
$|K_{{X}}|$.
Let $L_{i}=\mathcal O_{{X}_{i}}(K_{{X}}|_{{X}_{i}})$, where ${X}_{i}$ is the variety
obtained from  the  intersection of $n-i$
general members of $|K_{{X}}|$. Then
${X}_{i}$ has canonical singularities
by \cite[Lemma 6.6]{CKM}.
Consider the following short exact sequence:}
\begin{equation}\label{eq.restriction.sequence}
{0\longrightarrow L_{i}^{-1} \longrightarrow
\mathcal{O}_{{X}_{i}}
\rightarrow\mathcal{O}_{{X}_{i-1}}\longrightarrow 0.}
\end{equation}
{We note that $H^1(L_{i}^{-1})=0$
by Serre duality and
the  Kawamata-Viehweg vanishing theorem
so, if ${X}_{i-1}$ is a regular variety, then so is ${X}_{i}$.
We have shown that ${X}_2$ is regular,
so we have that ${X}_i$ is regular for all
$2 \le i \le n$ by induction.}
\end{proof}

{\begin{thm}\label{theorem.p_g.bounded}
 Let $X$ be a Horikawa variety of dimension $n$.
If  the image of $X$ by its canonical
 morphism is singular, {then  $p_g(X) \leq  n+4$}.
\end{thm}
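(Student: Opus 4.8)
The plan is to reduce, exactly as in Theorem~\ref{regular}, to a double cover of a surface of minimal degree, but now to slice $Y$ through its singular locus so that the singular (cone) case is forced to occur; the bound on the invariant $e$ of the resulting Hirzebruch surface will then translate directly into a bound on $p_g$. Since $p_g$, the canonical model and the hypothesis that $Y$ is singular are all unaffected, I would first replace $X$ by its canonical model and assume, by Corollary~\ref{cor.strong.Horikawa} and \cite[Proposition 2.5]{Kobayashi}, that $\phi\colon \overline X\to Y$ is finite of degree $2$ onto a variety of minimal degree $Y\subset\mathbb P^{N}$ with $N=p_g-1$ and $\deg Y=p_g-n$.

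Since $Y$ is singular it is a cone; let $y$ be a vertex and choose $H_1,\dots,H_{n-2}$ general among the hyperplanes \emph{through} $y$. Setting $Y_2:=Y\cap H_1\cap\cdots\cap H_{n-2}$, the cone structure is preserved and the degree is unchanged under such sections, so $Y_2$ is a singular surface of minimal degree in $\mathbb{P}^{p_g-n+1}$, namely a cone over a rational normal curve of degree $e\ge 2$ with vertex $y$, whose minimal desingularization is the Hirzebruch surface $\mathbb F_e$. Comparing degrees, $\deg Y_2=\deg Y=p_g-n$, while a cone over a rational normal curve of degree $e$ has degree $e$; hence
\begin{equation*}
 e=p_g-n .
\end{equation*}
Thus it suffices to show $e\le 4$. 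Pulling $Y_2$ back to $\overline X$ and normalizing the reduced fiber product yields a finite degree-$2$ morphism $\phi_2\colon \overline X_2\to Y_2$ fitting into a diagram of the shape \eqref{CD}, and I would split into the two cases according to whether $\phi_2^{-1}(y)$ is one point or two, exactly as in Cases 2.1 and 2.2 of Theorem~\ref{regular}. Once $\overline X_2$ is known to have at worst canonical singularities, the discrepancy $a$ of $\overline q$ is nonnegative, and the computations leading to \eqref{bound1}, \eqref{eq.for.boundedness1}, \eqref{bound2} and \eqref{eq.for.boundedness2} apply verbatim, giving $e\in\{2,3,4\}$ in the first case and $e=2$ in the second. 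In either case $e\le 4$, whence $p_g=n+e\le n+4$. As a check, for $n=2$ one recovers Horikawa's classical bound $p_g\le 6$.

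The main obstacle is precisely establishing that $\overline X_2$ has only canonical singularities. In Theorem~\ref{regular} the sections were general, so this was immediate from \cite[Lemma 6.6]{CKM}; here the $H_i$ are constrained to pass through the vertex $y$, over which $\overline X$ and its sections could \emph{a priori} acquire worse singularities, so the quoted Bertini-type result no longer applies directly at the points of $\phi^{-1}(y)$. Away from $\phi^{-1}(y)$ there is no constraint on a general $H_i$ through $y$, and the general-section argument still yields canonical singularities, so the entire difficulty is concentrated at the finitely many points of the fiber $\phi^{-1}(y)$. I would resolve this by a local analytic study there, exploiting that $\overline X$ is Gorenstein with canonical singularities and that $\phi$ is finite of degree $2$: the double-cover structure ${\phi_2}_{\ast}\mathcal O_{\overline X_2}=\mathcal O_{Y_2}\oplus\mathcal L^{-1}$ determines $\overline X_2$ near these points from the branch behaviour over $y$, and an inversion-of-adjunction argument comparing $\overline X_2$ with $\overline X$ along the special complete intersection shows the singularities remain canonical. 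Securing this nonnegativity of $a$ is the crux; everything else is the surface computation already carried out in Theorem~\ref{regular}.
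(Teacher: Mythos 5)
Your reduction is exactly the paper's: slice $Y$ by $n-2$ general hyperplanes through a vertex $y$, observe $e=\deg Y_2=\deg Y=p_g-n$, and then run Cases 2.1 and 2.2 of Theorem~\ref{regular} to get $e\leq 4$ via \eqref{eq.for.boundedness1} and \eqref{eq.for.boundedness2}. You also correctly isolate the crux: since the hyperplanes are forced through $y$, the Bertini-type result \cite[Lemma 6.6]{CKM} no longer guarantees that $\overline X_2$ has canonical singularities at the points of $\phi^{-1}(y)$, and without canonicity the nonnegativity of the discrepancy $a$ (hence the whole surface computation) collapses. The problem is that at this crux your proposal stops being a proof: ``a local analytic study'' is a placeholder, and the tool you name, inversion of adjunction ``comparing $\overline X_2$ with $\overline X$,'' points in the wrong direction. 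Adjunction-type statements let you pass from good singularities of a divisor (or slice) to good singularities of the ambient pair; they do not let you deduce canonicity of a \emph{special} complete-intersection slice from canonicity of the ambient variety. Indeed that implication is false in general: already for a Gorenstein canonical threefold singularity that is not cDV, hyperplane sections through the singular point fail to be canonical. So the step you call the crux is a genuine gap, not a routine verification.

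The paper closes this gap by a different mechanism, which goes through the base rather than the ambient variety: $Y_2$ is a cone over a rational normal curve, hence has log terminal singularities; by \cite[Proposition 5.20]{KM} log terminality passes through the finite degree-$2$ morphism $\phi_2$, so $\overline X_2$ is log terminal; and $\overline X_2$ is locally Gorenstein (it is a complete intersection of Cartier divisors in the Gorenstein variety $\overline X$), so log terminal plus Gorenstein yields canonical singularities. Note that this argument never uses that $\overline X$ is canonical near $\phi^{-1}(y)$ at all --- the decisive inputs are the klt-ness of the explicit surface $Y_2$ and the Gorenstein property of the slice, precisely the two ingredients you mention in passing (``the double-cover structure'' and ``$\overline X$ is Gorenstein'') but do not assemble. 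With that substitution your argument becomes complete and is essentially the paper's proof, including the final numerology $p_g=n+e\leq n+4$ recovering Horikawa's bound $p_g\leq 6$ for $n=2$.
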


\begin{proof}
 {
By \cite[Proposition 2.5]{Kobayashi}
we know that the base point free
complete canonical linear series $|K_{\overline{X}}|$
induces a finite morphism $\phi \colon \overline{X} \rightarrow
Y\subset \mathbb P^{N}$, of degree
$2$ onto $Y$, where $Y$ is a singular variety
of minimal
degree in $\mathbb P^{N}$.}
{Then $Y$ is a cone over a smooth variety of minimal degree.
Let $y$ be a point of the vertex of $Y$. If we choose  $n-2$
general hyperplanes through $y$, by considering their intersection and
its pullback by $\phi$, we get
a finite, degree $2$ morphism
$\phi_2: \overline X_2 \longrightarrow Y_2$, where
$Y_2$ is an irreducible, singular surface of minimal degree
(i.e., a cone over a
(smooth) rational normal curve) and $\overline X_2$ is
also irreducible}
{and
locally Gorenstein.}
{Since $Y_2$ has log terminal singularities, it follows
from \cite[Proposition 5.20]{KM}
that $\overline X_2$ also has log terminal singularities.
Then, since $\overline X_2$ is locally Gorenstein, $\overline X_2$ has canonical singularities.
Therefore $\overline X_2$ and $\phi_2$ are like $\overline X_2$ and $\phi_2$ in
the proof of Theorem~\ref{regular} and $Y_2$ is like $Y_2$ of Case 2 of
the proof of Theorem~\ref{regular}, so from \eqref{eq.for.boundedness1}
and \eqref{eq.for.boundedness2} if follows that
$Y_2$ is a nondegenerate surface in $\PPP^3$, $\PPP^4$ or $\PPP^5$.}
{Thus
$Y$ is a nondegenerate variety in $\PPP^{n+1}$,
$\PPP^{n+2}$ or $\PPP^{n+3}$, so
$p_g(X)=p_g(\overline X) \leq n+4$.}
\end{proof}

Now we present a very interesting result, Proposition~\ref{deformation},
on the deformations of canonical morphisms of Horikawa variety.
{This crucially depends on} {Theorem~\ref{regular}} and the base point freeness of $K_X$  .
{
First we make clear  what
we mean by a deformation of a morphism:}

{\begin{definition}\label{defi.deformation}
Let $X$ be  an algebraic projective normal variety  and let
\begin{equation*}
\psi: X \longrightarrow \PPP^N
\end{equation*}
be a morphism.
Let $T$ be
a smooth disc. A deformation of $\psi$ is
a $T$--morphism
\begin{equation*}
 \varPsi: \mathfrak X \longrightarrow \PPP^N_T
\end{equation*}
such that
\begin{enumerate}
 \item the variety $\mathfrak X$ is irreducible and reduced;
 \item the morphism $\mathfrak X \longrightarrow T$ is proper
and surjective;
\item $\mathfrak X_0=X$; and
\item  $\Psi_0=\psi$.
\end{enumerate}
\end{definition}}

\begin{prop}\label{deformation}
Let $X$ be a Horikawa variety of dimension $n$.
Let $\overline X$ be its canonical model. Then the general deformation
of the canonical morphism of $X$
is again a generically finite canonical morphism of degree $2$
onto a variety of minimal degree.
Also, the general deformation of $\overline X$
is again a canonical model of a
Horikawa variety.
\end{prop}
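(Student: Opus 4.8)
The plan is to run the whole argument on the flat family attached to the deformation, letting regularity together with the numerical equality defining Horikawa varieties do the work. After shrinking the disc $T$ I may assume that $\pi\colon\mathfrak X\to T$ is flat with $\mathfrak X_0=X$; in characteristic $0$ the general fibre $\mathfrak X_t$ is then irreducible and reduced, and since $X$ is Gorenstein the relative dualizing sheaf $\omega_{\mathfrak X/T}$ is a line bundle restricting to $\omega_{\mathfrak X_t}$ on each fibre. The first and decisive step is to recognise the polarization as the canonical one. Set $\mathcal M:=\varPsi^*\mathcal O_{\PPP^N_T}(1)$; by definition of the canonical morphism $\mathcal M|_{\mathfrak X_0}=\varphi^*\mathcal O_{\PPP^N}(1)=\omega_X$, so $\mathcal N:=\mathcal M\otimes\omega_{\mathfrak X/T}^{-1}$ is trivial on $\mathfrak X_0$. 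Here is exactly where Theorem~\ref{regular} enters: because $h^1(\mathcal O_X)=0$, upper semicontinuity gives $h^1(\mathcal O_{\mathfrak X_t})=0$, so $\Pic^0_{\mathfrak X/T}$ is \'etale and hence trivial over $T$ near $0$; as the N\'eron--Severi class of $\mathcal N$ is locally constant and vanishes at $0$, the class of $\mathcal N$ restricts into $\Pic^0$, and therefore $\mathcal N|_{\mathfrak X_t}\cong\mathcal O_{\mathfrak X_t}$ for general $t$. Thus $\mathcal M|_{\mathfrak X_t}\cong\omega_{\mathfrak X_t}$ and $\Psi_t$ is induced by sections of $\omega_{\mathfrak X_t}$.

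Next I would fix the numerology. Flatness and the Gorenstein condition make $K_{\mathfrak X_t}^n=(\omega_{\mathfrak X/T}|_{\mathfrak X_t})^n$ independent of $t$, equal to $K_X^n=2(p_g(X)-n)$. The sections $\Psi_t^*x_0,\dots,\Psi_t^*x_N$ come from $\varPsi^*x_0,\dots,\varPsi^*x_N\in H^0(\mathfrak X,\mathcal M)$; since $Y=\varphi(X)$ is nondegenerate, no nonzero combination of the $\varPsi^*x_i$ vanishes on $\mathfrak X_0$, and the locus of $t$ over which some nonzero combination vanishes on $\mathfrak X_t$ is closed (the image under the proper projection $T\times\PPP^N\to T$ of the corresponding incidence variety) and avoids $0$, hence is finite. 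So for general $t$ the image $Y_t=\Psi_t(\mathfrak X_t)$ is nondegenerate in $\PPP^N$, forcing $h^0(\omega_{\mathfrak X_t})\ge N+1$; combined with the upper bound $h^0(\omega_{\mathfrak X_t})\le h^0(\omega_X)=N+1$ this gives $p_g(\mathfrak X_t)=p_g(X)$. Finally $K_{\mathfrak X_t}^n=2(p_g-n)>0$ shows $\omega_{\mathfrak X_t}$ is big, so $\Psi_t$ is generically finite onto the $n$-dimensional $Y_t$. In other words $\mathfrak X_t$ satisfies the Horikawa equality $K^n=2(p_g-n)$ and its canonical image has maximal dimension.

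The crux, and the step I expect to be the main obstacle, is to show that the general fibre $\mathfrak X_t$ again has at worst canonical singularities and nef canonical divisor. I want to stress that this cannot be replaced by semicontinuity of the degree of the map: a family of birational morphisms onto varieties of degree $2(p_g-n)$ can specialize to a degree $2$ morphism onto a variety of minimal degree (the toy model being a family of $\PPP^1$'s mapping isomorphically to smooth conics and degenerating to a double cover of a line), so the equality $d_te_t=2(p_g-n)$, where $d_t=\deg\Psi_t$ and $e_t=\deg Y_t$, together with $e_t\ge N-n+1=p_g-n$, only yields $\deg\Psi_t\le 2$ and does not by itself exclude $\deg\Psi_t=1$. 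The honest route is to control the singularities of $\mathfrak X_t$ geometrically, exactly in the spirit of the proofs of Theorems~\ref{regular} and~\ref{theorem.p_g.bounded}, rather than by any (false) general openness of canonicity. Once canonical singularities and nefness are in hand, the Fujita--Kobayashi characterization recalled in the introduction applies to $\mathfrak X_t$: a variety with these properties whose canonical image has maximal dimension and which satisfies $K^n=2(p_g-n)$ is a Horikawa variety, so its canonical morphism $\Psi_t$ is generically finite of degree $2$ onto a variety of minimal degree. This excludes $\deg\Psi_t=1$ and proves the first assertion.

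For the statement about $\overline X$, I would start from the finite canonical morphism $\phi$ of the canonical model and repeat the argument. Since $\omega_{\overline X}$ is ample and ampleness is an open condition in a flat proper family, $\omega_{\mathfrak X_t}$ remains ample for general $t$; together with the canonical singularities established above this shows that $\mathfrak X_t$ is its own canonical model. By the first part its canonical morphism is of degree $2$ onto a variety of minimal degree, so $\mathfrak X_t$ is a strong Horikawa variety, that is, the canonical model of a Horikawa variety, as claimed.
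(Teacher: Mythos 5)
Your Steps 1 and 2 are sound and in fact run parallel to the paper's own skeleton: your $\Pic^0$/N\'eron--Severi argument is essentially a reproof of \cite[Lemma 2.4]{Inventiones}, which is exactly what the paper invokes (using Theorem~\ref{regular} to supply regularity) to conclude that $\Psi_t$ is again a canonical morphism; and your semicontinuity-plus-nondegeneracy computation of $p_g(\mathfrak X_t)$ is a nice elementary substitute for the paper's appeal to the deformation invariance of $p_g$ and $K^n$ in \cite[Theorem 6]{Kawamata}. Your treatment of the second assertion (ampleness is open in the family, so $\mathfrak X_t$ is its own canonical model) likewise matches the paper's use of Corollary~\ref{cor.strong.Horikawa}.

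The problem is the step you yourself call the crux: you never prove that the general fibre has canonical singularities; you only say it should be done ``in the spirit of'' Theorems~\ref{regular} and \ref{theorem.p_g.bounded}, and, worse, you dismiss as ``false'' precisely the statement that closes the gap. Openness of canonicity in this setting is a theorem: the main theorem of \cite{Kawamata} (\emph{Deformations of canonical singularities}) asserts that if the central fibre of a flat family over a smooth curve germ has canonical singularities, then so do the total space and the nearby fibres. This is exactly what the paper cites at this point; your toy example of conics degenerating to a double line is not a counterexample to it, since there the degeneration concerns the morphisms and their images, not the singularities of the fibres, and your own degree bound $d_t\le 2$ already shows that the only remaining danger is $d_t=1$, which Kobayashi's characterization \cite[Proposition 2.5]{Kobayashi} rules out once canonicity is known. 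Note also that nefness is not a separate obstacle in your setup: by your Steps 1 and 2 the morphism $\Psi_t$ is induced by the \emph{complete} linear series $|K_{\mathfrak X_t}|$, which is therefore base-point-free, so $K_{\mathfrak X_t}$ is nef, and $K_{\mathfrak X_t}^n=2(p_g(\mathfrak X_t)-n)>0$ then gives bigness, hence general type. So the single missing ingredient is canonicity of the fibres; with \cite{Kawamata} inserted at that point, your argument becomes complete and coincides in substance with the paper's proof.
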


\begin{proof} We will use
\cite[Lemma 2.4]{Inventiones}. Note that, although
this statement
requires
smoothness, in fact,
it holds for
varieties with canonical singularities.
Thus, since by Theorem~\ref {regular}, {$X$ and
$\overline X$ are regular},
\cite[Lemma 2.4]{Inventiones} applies to both.
Therefore,
{for
any \emph{small}  deformation (by a small deformation we
mean that we shrink $T$ if needed)
\begin{equation*}
 \Phi: \mathfrak X \longrightarrow \PPP^N_T,
\end{equation*}
of the canonical morphism $\varphi$ of $X$
(respectively,
any  small deformation
\begin{equation*}
 \overline{\Phi}: \overline{\mathfrak X} \longrightarrow \PPP^N_T,
\end{equation*}
of the canonical morphism $\phi$ of $\overline X$),
$\Phi_t$ (respectively, $\overline{\Phi}_t$) is  a canonical morphism, for
all $t \in T$.}
{In addition,
the image of $\mathfrak X_t$ (respectively,
of $\overline{\mathfrak X}_t$)
under its canonical morphism $\Phi_t$
(respectively, its canonical morphism $\overline{\Phi}_t$)
is of
maximum dimension $n$.}
By the main theorem of \cite{Kawamata}, the
deformation of canonical singularities is again canonical
and, by \cite[Theorem 6]{Kawamata}, $K_{\mathfrak X_t}^{n}$
and $p_g(\mathfrak X_t)$
(respectively, $K_{\overline{\mathfrak X}_t}^{n}$ and
$p_g(\overline{\mathfrak X}_t)$)
are invariant under deformations.
{Thus, $\mathfrak X_t$
(respectively, $K_{\overline{\mathfrak X}_t}^{n}$) is a
Horikawa variety and, by \cite[Proposition 2.5]{Kobayashi},
$\Phi_t$ is a generically finite morphism of degree $2$
(respectively, $\overline{\Phi}_t$ is a finite morphism of degree $2$)
onto
a variety of minimal degree.
In particular $\overline{\mathfrak X}_t$
is a strong Horikawa variety so,
by Corollary~\ref{cor.strong.Horikawa}, is its own canonical model.}
\end{proof}

{Proposition~\ref{deformation} has
this obvious implication for the components
of the moduli space of varieties of general type:}

\begin{corollary}\label{cor.moduli}
 {The general points of the components of the moduli of varieties of general type that contain a canonical model of a Horikawa variety are canonical models of Horikawa varieties.}
\end{corollary}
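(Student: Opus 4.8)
The plan is to deduce this directly from Proposition~\ref{deformation} together with the irreducibility of a moduli component. First I would fix an irreducible component $M$ of the moduli space of canonical models of varieties of general type, and assume that $M$ contains a point $[\overline X_0]$ parametrizing the canonical model of some Horikawa variety. The goal is to show that the general point of $M$ is again of this form.

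The key step is to observe that the locus $H \subseteq M$ of points parametrizing canonical models of Horikawa varieties is open. To see this, I would fix any $[\overline X] \in H$. The local structure of $M$ near $[\overline X]$ is governed by the (versal) deformations of $\overline X$, and Proposition~\ref{deformation} tells us that every small deformation of $\overline X$ is again the canonical model of a Horikawa variety; hence a whole neighborhood of $[\overline X]$ is contained in $H$. Since $H$ is nonempty---it contains $[\overline X_0]$---and open in the irreducible variety $M$, it is dense, and therefore the general point of $M$ lies in $H$, which is precisely the assertion.

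The point requiring care is the passage from the one-parameter statement of Proposition~\ref{deformation} to genuine openness in $M$: one must know that the quantities controlling the Horikawa condition are preserved not merely along a disc but over the whole base of a versal deformation. This is in fact guaranteed, since the invariance of $K^n$ and $p_g$ (by \cite[Theorem 6]{Kawamata}), the preservation of canonical singularities (by the main theorem of \cite{Kawamata}), and the maximality of the dimension of the canonical image (by \cite[Lemma 2.4]{Inventiones}) all hold for deformations over an arbitrary base, so the argument of Proposition~\ref{deformation} yields an honest Zariski-open neighborhood rather than only an arc-wise statement. Granting this, the corollary follows immediately; this is why the implication can fairly be called obvious.
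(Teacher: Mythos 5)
Your proposal is correct and follows essentially the same route as the paper: the paper offers no separate proof, presenting Corollary~\ref{cor.moduli} as an ``obvious implication'' of Proposition~\ref{deformation}, and your openness-plus-irreducibility argument is exactly the intended elaboration of that implication. Your added care in upgrading the one-parameter (disc) statement of Proposition~\ref{deformation} to genuine openness near a point of the moduli component is a sensible filling-in of the detail the paper leaves implicit.
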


We now {recall} the generalized notion of regularity for an algebraic variety,
for varieties of arbitrary dimension.

\begin{defn}
A variety $X$ of dimension $n$ is said to be
{\it pluriregular} if $H^{1}(\mathcal{O}_{X})=\cdots=H^{n-1}(\mathcal{O}_{X})=0.$
\end{defn}

\begin{thm}\label{regularity}
 Let ${X}$ be a Horikawa variety of dimension $n$.
 \begin{enumerate}
  \item If the image $Y$ of its
canonical morphism $\varphi$ is smooth,
then ${X}$ is pluriregular.
\item If, in addition, $Y$ is a
rational normal scroll
then the general fiber  of ${X}$
{over $\mathbb P^1$ is a Horikawa variety}
with geometric genus
$n$.
 \end{enumerate}
\end{thm}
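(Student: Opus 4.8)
\emph{Plan for (1).} I would descend to the canonical model $\overline X$ and use the flat double cover $\phi\colon\overline X\to Y$ exactly as in Case~1 of the proof of Theorem~\ref{regular}. Since $Y$ is smooth of minimal degree and $\overline X$ is Gorenstein, $\phi$ is flat and $\phi_*\mathcal O_{\overline X}=\mathcal O_Y\oplus\mathcal L^{-1}$ with $\mathcal L^{\otimes 2}=\mathcal O_Y(B)$. Comparing the ramification formula $\mathcal O_{\overline X}(K_{\overline X})=\phi^*(\mathcal O_Y(K_Y)\otimes\mathcal L)$ with the canonical identity $\mathcal O_{\overline X}(K_{\overline X})=\phi^*\mathcal O_Y(1)$, and using that $\phi$ is finite while $\operatorname{Pic}(Y)$ is torsion free (so $\phi^*$ is injective), I obtain $\mathcal L=\mathcal O_Y(1)\otimes\mathcal O_Y(-K_Y)$, hence $\mathcal O_Y(K_Y)\otimes\mathcal L=\mathcal O_Y(1)$. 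Then $H^i(\mathcal O_{\overline X})=H^i(\mathcal O_Y)\oplus H^i(\mathcal L^{-1})$, and Serre duality on the smooth $n$-fold $Y$ gives $H^i(\mathcal L^{-1})\cong H^{n-i}(\mathcal O_Y(K_Y)\otimes\mathcal L)^\vee=H^{n-i}(\mathcal O_Y(1))^\vee$. Because a smooth variety of minimal degree is arithmetically Cohen--Macaulay with $H^i(\mathcal O_Y)=0$ for $0<i<n$ and $H^i(\mathcal O_Y(1))=0$ for $i>0$, both summands vanish for $1\le i\le n-1$, so $\overline X$ is pluriregular. Since canonical singularities are rational and $X\to\overline X$ is birational, $H^i(\mathcal O_X)\cong H^i(\mathcal O_{\overline X})$ for all $i$, whence $X$ is pluriregular.

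\emph{Plan for (2).} The scroll structure $\pi\colon Y=\mathbb P(\mathcal E)\to\mathbb P^1$ composed with $\varphi$ gives a morphism $f=\pi\circ\varphi\colon X\to\mathbb P^1$; its general fiber $F=f^{-1}(p)=\varphi^{-1}(\mathbb P^{n-1}_p)$ is a degree-$2$ cover of the linear ruling fiber $\mathbb P^{n-1}_p\cong\mathbb P^{n-1}$, onto which $\mathcal O_Y(1)$ restricts as $\mathcal O_{\mathbb P^{n-1}}(1)$. The heart of the argument is adjunction: since $\mathcal O_X(F)=f^*\mathcal O_{\mathbb P^1}(1)$ is trivial on $F$, I get $K_F=K_X|_F=\varphi^*\mathcal O_Y(1)|_F=\phi_F^*\mathcal O_{\mathbb P^{n-1}}(1)$, where $\phi_F:=\varphi|_F\colon F\to\mathbb P^{n-1}$ is generically finite of degree $2$. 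Thus $K_F$ is nef (restriction of the nef $K_X$) and big (as $K_F^{n-1}=2\cdot\mathcal O_{\mathbb P^{n-1}}(1)^{n-1}=2>0$), so $F$ is of general type; from the splitting $\phi_{F*}\mathcal O_F=\mathcal O_{\mathbb P^{n-1}}\oplus\mathcal O_{\mathbb P^{n-1}}(-(n+1))$ one reads off $p_g(F)=h^0(K_F)=h^0(\mathcal O_{\mathbb P^{n-1}}(1))+h^0(\mathcal O_{\mathbb P^{n-1}}(-n))=n$, and the full system $|K_F|$ induces exactly $\phi_F$ onto the minimal-degree variety $\mathbb P^{n-1}$. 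The numerology $K_F^{n-1}=2=2(p_g(F)-(n-1))$ then certifies that $F$ is a Horikawa variety of dimension $n-1$ with geometric genus $n$. The general fiber is connected and irreducible because its branch divisor $B_F=B\cap\mathbb P^{n-1}_p\in|\mathcal O_{\mathbb P^{n-1}}(2n+2)|$ is nonempty, so $f$ has connected fibers and $f^{-1}(p)$ is irreducible for general $p$.

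\emph{Main obstacle.} The delicate point is that $F$ must have at worst canonical singularities in order to be a Horikawa variety. Here $X$ is singular and the pencil of fibers is cut by the merely base-point-free (non-ample) system $|f^*\mathcal O_{\mathbb P^1}(1)|$, so ordinary Bertini does not suffice. I would establish it by a Bertini-type statement for base-point-free linear systems on varieties with canonical singularities, in the spirit of \cite[Lemma 6.6]{CKM}; equivalently, working on $\overline X$, the fiber $\overline F$ is the double cover of $\mathbb P^{n-1}$ branched along $B_F$, and since $\overline X$ being canonical forces the pair $(Y,\tfrac12 B)$ to be canonical, restriction to a general ruling fiber together with inversion of adjunction keeps $(\mathbb P^{n-1},\tfrac12 B_F)$ canonical, whence $\overline F$—and hence $F$—has canonical singularities. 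Once this is secured, the cohomological and numerical verifications above are routine.
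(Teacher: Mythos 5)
Your plan follows the paper's own proof in all essentials. For (1) the paper argues exactly as you do: pass to $\overline X$, use flatness of the finite degree-$2$ canonical morphism onto the smooth $Y$, the splitting $\phi_*\mathcal O_{\overline X}=\mathcal O_Y\oplus\mathcal L^{-1}$, the identity $\mathcal O_Y(K_Y)\otimes\mathcal L=\mathcal O_Y(1)$, and Serre duality; the only cosmetic difference is that you invoke the ACM property of smooth minimal-degree varieties uniformly, while the paper checks the vanishing case by case ($\mathbb P^n$, quadric, scroll via pushforward to $\mathbb P^1$). For (2) the paper restricts to $G'=\phi^{-1}(F)$ and shows that $H^0(K_{\overline X})\to H^0(K_{\overline X}|_{G'})$ is surjective by an $h^1$-vanishing computation, whereas you read off $h^0(K_F)=n$ directly from the splitting $\phi_{F*}\mathcal O_F=\mathcal O_{\mathbb P^{n-1}}\oplus\mathcal O_{\mathbb P^{n-1}}(-(n+1))$; these two routes to the same key fact are interchangeable (though you should carry out your computation on the canonical model, where the cover of the fiber is finite and flat, as you do in your last paragraph). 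One correction to your ``main obstacle'' step: it is not true that $\overline X$ canonical forces the pair $(Y,\tfrac12 B)$ to be \emph{canonical}; finite morphisms transfer klt/lc (this is \cite[Proposition 5.20]{KM}), and the discrepancy relation $a(E')=r\,a(E)+r-1$ under a degree-$r$ ramified cover shows that only klt descends. The repair is precisely the paper's one-line argument: $(Y,\tfrac12 B)$ is klt, restriction to a general ruling fiber stays klt (this is Bertini for free linear systems rather than inversion of adjunction), hence $G'$ is klt by the finite-cover transfer, and since $K_{G'}$ is Cartier, klt plus Gorenstein yields canonical singularities. With that substitution your argument closes the same way the paper's does.
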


\begin{proof}
{For the proof of (1) we may assume $n >2$,
for the result for $n=2$ has been proved in Theorem~\ref{regular}.}
In view of Proposition~\ref{flat},
we will work with the canonical model
$\overline X$ of ${X}$, {since
${X}$ is pluriregular if and only if $\overline X$
is pluriregular.}
{The canonical morphism $\phi$ of
$\overline X$ is finite of degree $2$.
Since $\overline X$ is locally Cohen-Macaulay}
and $Y$ is smooth, the morphism
$\phi$  is flat.
Then
{\begin{equation*}
 \phi_*\mathcal{O}_{\overline X} = \mathcal{O}_Y \oplus \mathcal{L}^{-1},
\end{equation*}
where $\mathcal{L}$ is line bundle.}
Let $R$ be the ramification divisor and let $B$
be the branch divisor of {$\phi$}.
Then  $\mathcal{O}_Y(B)= \mathcal{L}^{\otimes 2}$ and
\begin{equation}\label{equation.regularity}
 \mathcal O_{\overline X}(K_{\overline X})=
 \phi^*(\mathcal{O}_Y(K_Y) \otimes \mathcal{L}) =
 \phi^*\mathcal{O}_Y(1).
\end{equation}
{Showing}  that $\overline X$ is
pluriregular is equivalent to showing the vanishing of
the intermediate cohomology of $\mathcal{O}_Y$ and $\mathcal{L}^{-1}$,
because
$R^i{\phi}_* \mathcal{O}_{\overline X}=0$
for all $i >0$ {($\phi$ is finite)}.
By the classification of varieties of minimal degree, we have
three possible cases for $Y$:

\begin{enumerate}
\item  $Y$ is $\mathbb{P}^{n}$, $n\geq 3$.

\item $Y$ is a smooth quadric hypersurface in $\mathbb{P}^{n+1}$, $n\geq 3$.

\item  $Y$ is a smooth rational normal scroll of dimension $n\geq 3$.
In this case
$Y$ is
fibered over $\mathbb{P}^{1}$, hence $\overline X$ is also fibered over
$\mathbb{P}^{1}$
\end{enumerate}

{If $Y$} is $\mathbb{P}^n$ or a smooth quadric in $\mathbb{P}^{n+1}$,
then  the vanishing of the intermediate cohomology of $\mathcal{O}_Y$ and $\mathcal{L}^{-1}$
follows from the vanishing of the intermediate cohomology of line bundles in
$\mathbb{P}^n$  and $\mathbb{P}^{n+1}$ respectively.

\smallskip

It remains to consider the case that $Y$ is a smooth rational normal scroll,
which is a $\mathbb{P}^{n-1}$-bundle over $\mathbb{P}^1$.
 We may write
 \begin{equation*}
 p \colon Y=\mathbb{P}(\mathcal{E}) \longrightarrow \mathbb{P}^1,
 \end{equation*}
 where
 \begin{equation*}
 \mathcal{E}=\mathcal{O}_{\mathbb{P}^1}(a_1) \oplus
 \cdots \oplus \mathcal{O}_{\mathbb{P}^1}(a_n)
 \end{equation*}
 with $0 \, {<} \, a_1 \le \ldots \le a_n$.
{We have}  $$h^i(\mathcal O_{\overline X})=
h^{i}({\phi}_{\ast}\mathcal{O}_{\overline X})=
h^{i}(\mathcal{O}_Y) + h^{i}(\mathcal{L}^{-1})=h^{i}
(\mathcal{O}_Y)+h^{n-i}(\mathcal{O}_Y(1)),$$
because
$\mathcal{O}_Y(K_{Y}) \otimes \mathcal{L}=\mathcal{O}_{Y}(1)$
{by
\eqref{equation.regularity}}.
For any $0 < i < n$ $$h^{i}
(\mathcal{O}_Y)=h^i(p_*\mathcal{O}_Y) { =
h^i(\mathcal{O}_{\mathbb{P}^1})}=0 $$ and
$$h^{i}(\mathcal{O}_Y(1))=h^i(p_*\mathcal{O}_Y(1))
=h^i(\mathcal{E})=0.$$ The pluriregularity of
$\overline X$ now follows.

\smallskip
Let us now prove (2).
Let $F$ be a general fiber of $p$ and let
$G'=\phi^{-1}(F)$.
 We have the
following long exact sequence of cohomology
\begin{multline*}
0\rightarrow H^{0}(\mathcal O_{\overline X}(K_{\overline X}-G'))
\longrightarrow
H^{0}(\mathcal O_{\overline X}(K_{\overline X}))\longrightarrow
\\
H^{0}%
(\mathcal O_{G'}(K_{\overline X}|_{G'}))\longrightarrow
H^{1}(\mathcal O_{\overline X}(K_{\overline X}-G')).
\end{multline*}
Because of  {$R^i{\phi}_*\mathcal{O}_{\overline X}
=0$},
$\mathcal O_{\overline X}(K_{\overline X}-G')=
\phi^{\ast}(\mathcal{O}_{Y}(1)
\otimes \mathcal{O}_{Y}(-F))$ and
{\eqref{equation.regularity}},
we have
\begin{multline*}
 h^{1}(\mathcal O_{\overline X}(K_{\overline X}-G'))
 =h^{1}(\mathcal{O}_{Y}(1)
 \otimes \mathcal{O}_{Y}(-F))+
h^1(\mathcal{L}^{-1} \otimes \mathcal{O}_{Y}(1)
\otimes \mathcal{O}_{Y}(-F))= \\
h^1(\mathcal{O}_{Y}(1) \otimes \mathcal{O}_{Y}(-F))+
h^{n-1}(\mathcal{O}_{Y}(2) \otimes \mathcal{O}_{Y}(-F)).
\end{multline*}
Pushing forward to $\mathbb{P}^1$ via $p$, it is easy to see that
{both
$h^1(\mathcal{O}_{Y}(1) \otimes \mathcal{O}_{Y}(-F))$ and
$h^{n-1}(\mathcal{O}_{Y}(2) \otimes \mathcal{O}_{Y}(-F))$ vanish}.
This implies that the complete linear
series $|K_{\overline X}|$ restricts to the complete linear series
$|K_{\overline X}|_{_{{G'}}}|$, which is
$|K_{G'}|$. Hence
$|K_{G'}|$ maps $G'$ onto $G=\mathbb{P}^{n-1}$
{as a finite double cover.
By \cite[Theorem 5.20]{KM} and since $K_{G'}$ is Cartier, we have that
$G'$ has canonical singularities.
Therefore $G'$ is a strong Horikawa variety} and $p_{g}(G')=n$.
{Let $G$ be the fiber of $X$ that corresponds to $G'$.
Then $p_g(G)=p_g(G')$.}
{Then the restriction of $\varphi$ to $G$ is the canonical
morphism of $G$,}
{which is generically of degree $2$ onto
$\mathbb{P}^{n-1}$.
Since $G'$ has canonical singularities, so does $G$,}
{therefore
$G$ is a Horikawa variety.}
\end{proof}

\begin{rem}
It might be illuminating to observe how beautifully the situation fits with
the classical case of surfaces. In Horikawa's work in \cite{Horikawa},
it turns out that
smooth Horikawa surfaces are indeed regular genus two fibrations over
$\mathbb{P}^{1}$ (in this case, $G'$ is a curve so $p_{g}(G')$
is the genus of $G'$). In the higher
dimensional case Theorem~\ref{regularity} shows the exact analogy.
That the numerical
situation dictates this in all dimensions is indeed compelling.
\end{rem}

\section{Simple connectedness of Horikawa varieties}

We devote this section to proving the
simple connectedness of Horikawa varieties. For this, first
we need to
state two results.
The first one is follows from a well-known result
on the fundamental group of the complement of smooth
submanifold of (real) codimension more than $1$.
The second one  is
based in the ideas and results
of  M. Nori
in \cite{Nori}.

\begin{lem}\label{cor.Godbillon}
 {Let $V$ be a smooth complex variety and let
 $W$ a complex, reduced
 subvariety of $V$, not
 necessarily smooth, and let
  \begin{equation*}
   \pi_1(V \smallsetminus W) \longrightarrow \pi_1(V)
  \end{equation*}
  be the homomorphism
  induced by inclusion.
  This homomorphism is surjective if
  $W$ has codimension $1$ in $V$ and an isomorphism if
  $W$ has codimension more than $1$ in $V$.}
 \end{lem}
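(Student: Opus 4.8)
The plan is to reduce the general statement, in which $W$ may be singular, to the classical result for smooth submanifolds recalled just before the lemma, which I will use in its two standard forms: if $M \subset V$ is a closed smooth submanifold of real codimension exactly $2$, then the inclusion $V \smallsetminus M \hookrightarrow V$ induces a surjection $\pi_1(V \smallsetminus M) \to \pi_1(V)$, and if the real codimension of $M$ is at least $3$, then this map is an isomorphism. Since $V$ is irreducible and $W$ is a proper closed analytic subset, $V \smallsetminus W$ is connected, so all fundamental groups can be taken with a common base point lying in $V \smallsetminus W$, and the inclusion-induced maps compose functorially along $V \smallsetminus W \hookrightarrow V \smallsetminus S \hookrightarrow V$ for any intermediate closed subset $S \subseteq W$. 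The passage between complex and real codimension is the obvious one: complex codimension $c$ means real codimension $2c$.

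I first treat the case where $W$ has codimension at least $2$, proving that the map is an isomorphism by induction on $\dim W$. Write $W_{\mathrm{sing}}$ for the singular locus of $W$ and $W_{\mathrm{reg}} = W \smallsetminus W_{\mathrm{sing}}$ for its smooth locus; the latter is a smooth submanifold of $V$, and $\dim W_{\mathrm{sing}} < \dim W$, so $W_{\mathrm{sing}}$ has strictly larger codimension. When $\dim W = 0$ the set $W$ is already smooth of complex codimension $\dim V \ge 2$, hence real codimension at least $4$, and the classical result gives the isomorphism directly. For the inductive step, the induction hypothesis applied to $W_{\mathrm{sing}}$ (which has smaller dimension and complex codimension at least $2$) shows that $V' := V \smallsetminus W_{\mathrm{sing}} \hookrightarrow V$ induces an isomorphism on $\pi_1$. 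Now $V'$ is smooth and $W_{\mathrm{reg}} = W \cap V'$ is a closed smooth submanifold of $V'$ of complex codimension at least $2$, i.e.\ real codimension at least $4 \ge 3$; the classical result gives $\pi_1(V' \smallsetminus W_{\mathrm{reg}}) \cong \pi_1(V')$. Since $V' \smallsetminus W_{\mathrm{reg}} = V \smallsetminus W$, composing the two isomorphisms yields $\pi_1(V \smallsetminus W) \cong \pi_1(V)$.

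For the case where $W$ has codimension $1$, I invoke the isomorphism just established. Its singular locus $W_{\mathrm{sing}}$ has complex codimension at least $2$, so by the previous case $\pi_1(V \smallsetminus W_{\mathrm{sing}}) \to \pi_1(V)$ is an isomorphism. Setting $V' = V \smallsetminus W_{\mathrm{sing}}$ again, the regular part $W_{\mathrm{reg}} = W \cap V'$ is a closed smooth submanifold of $V'$ of complex codimension $1$, that is, real codimension exactly $2$, so the classical result now gives only that $\pi_1(V' \smallsetminus W_{\mathrm{reg}}) \to \pi_1(V')$ is surjective. Composing this surjection with the preceding isomorphism, and using $V' \smallsetminus W_{\mathrm{reg}} = V \smallsetminus W$, gives the surjectivity of $\pi_1(V \smallsetminus W) \to \pi_1(V)$.

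The one point requiring care, and the reason the argument is phrased inductively, is that $W_{\mathrm{sing}}$ is itself in general singular, so I cannot remove it by a single appeal to the smooth result; the induction on $\dim W$ is exactly what allows me to strip off the singular strata one dimension at a time while keeping $\pi_1$ unchanged, each stratum having complex codimension at least $2$. An equivalent route would be to fix a stratification of $W$ and remove its strata in increasing order of dimension, applying the smooth statement to each locally closed stratum inside the smooth complement of the lower-dimensional ones, but the inductive formulation packages this more cleanly. Everything else is a formal consequence of the functoriality of the inclusion-induced maps on $\pi_1$ and the connectedness of the complements involved.
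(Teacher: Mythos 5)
Your proof is correct, and it is essentially the argument the paper has in mind: the paper states this lemma without proof, asserting only that it follows from the classical fact about complements of smooth closed submanifolds of real codimension $2$ (surjectivity) and $\geq 3$ (isomorphism), and your induction on $\dim W$ — stripping off the singular locus, which has strictly larger codimension, and then applying the smooth result to $W_{\mathrm{reg}}$ inside $V \smallsetminus W_{\mathrm{sing}}$ — is precisely the standard reduction that justifies that assertion. The only detail worth keeping in mind is that $W_{\mathrm{reg}}$ may have connected components of different dimensions when $W$ is reducible, but since every component satisfies the relevant codimension bound, the classical result applies componentwise and your argument goes through unchanged.
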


\begin{lem}\label{Nori2}
{
Let $p:V' \longrightarrow V$ be a finite morphism
of degree $2$
 among smooth, (quasiprojective) complex varieties $V'$ and $V$
 of dimension
 $n \geq 2$,
branched along a smooth, irreducible divisor $B$.
 If $V$ is
 simply connected, then so is $V'$.}
\end{lem}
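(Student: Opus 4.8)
The plan is to excise the branch locus, reduce the assertion to an honest (unramified) double cover, and then transport the simple connectedness of $V$ up to $V'$ using Lemma~\ref{cor.Godbillon}, the only genuinely deep ingredient being a theorem of Nori controlling the fundamental group of the complement of $B$.

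First I would set $R=p^{-1}(B)_{\mathrm{red}}$, the ramification divisor. Since $V'$ is smooth and $R$ is the fixed locus of the covering involution, $R$ is smooth, $p$ restricts to an isomorphism $R\to B$, and hence $R$ is an irreducible divisor. Writing $V^\circ=V\smallsetminus B$ and $V'^\circ=p^{-1}(V^\circ)=V'\smallsetminus R$, the restriction $p\colon V'^\circ\to V^\circ$ is a connected étale double cover (connected because $V'$, hence $V'^\circ$, is irreducible), so it determines a surjection $\epsilon\colon\pi_1(V^\circ)\to\mathbb Z/2$ whose kernel is the index-$2$ subgroup $\pi_1(V'^\circ)$. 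I would also record the local model: near $R$ the map is analytically $(z_1,\dots,z_n)\mapsto(z_1^2,z_2,\dots,z_n)$, so if $\gamma$ is a meridian of $B$ then $\epsilon(\gamma)\neq 0$, while the meridian $\gamma'$ of $R$ maps to $\gamma^2$.

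Next I would apply Lemma~\ref{cor.Godbillon} on both levels. Downstairs, as $B$ has codimension $1$, the map $\pi_1(V^\circ)\to\pi_1(V)=1$ is surjective, and by the classical description of its kernel as normally generated by the (mutually conjugate, since $B$ is irreducible) meridians, the group $G:=\pi_1(V^\circ)$ is the normal closure of the single element $\gamma$; in particular $G^{ab}$ is generated by the class of $\gamma$. Upstairs, again since $R$ has codimension $1$, Lemma~\ref{cor.Godbillon} yields a surjection $\pi_1(V'^\circ)\twoheadrightarrow\pi_1(V')$ whose kernel is the normal closure of $\gamma'=\gamma^2$; thus $\pi_1(V')=H/\langle\langle\gamma^2\rangle\rangle_H$, where $H:=\pi_1(V'^\circ)$ is the index-$2$ subgroup $\ker\epsilon$. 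The whole problem is therefore reduced to understanding the group $G$.

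The main obstacle is precisely this last point, and it cannot be resolved by group theory alone. The formal data available so far, namely that $G$ is normally generated by $\gamma$ and carries an index-$2$ subgroup $H$ with $\gamma\notin H$, are also satisfied by nonabelian groups: for a dihedral group $D_m$ with $m$ odd, taking $\gamma$ to be a reflection gives $\langle\langle\gamma\rangle\rangle=D_m$, $H=\langle r\rangle\cong\mathbb Z/m$ and $\gamma^2=1$, so the same bookkeeping would produce $\pi_1(V')\cong\mathbb Z/m\neq 1$. What rescues the argument is that $\pi_1(V\smallsetminus B)$ is not arbitrary: by the results of Nori in \cite{Nori} (applied, if one prefers to remain in the surface setting, after a standard Lefschetz hyperplane reduction to a general smooth surface section, where Bertini keeps $B$ smooth and irreducible and the Lefschetz theorems preserve all the relevant $\pi_1$'s) the group $\pi_1(V\smallsetminus B)$ is abelian. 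Granting this, $G$ is abelian and $G^{ab}$ is generated by $\gamma$, so $G=\langle\gamma\rangle$ is cyclic; its index-$2$ subgroup is then $H=\langle\gamma^2\rangle$, and killing $\gamma^2$ collapses $H$ entirely, giving $\pi_1(V')=1$. Consequently I would devote the bulk of the write-up to justifying the abelianness of $\pi_1(V\smallsetminus B)$ through Nori's theorem, every remaining step being formal.
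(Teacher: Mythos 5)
Your proof is correct and takes essentially the same route as the paper's: excise the branch divisor, observe that $p$ restricts to an \'etale double cover so that $\pi_1(V'\smallsetminus p^{-1}(B))$ sits as an index-$2$ subgroup of $\pi_1(V\smallsetminus B)$, apply Lemma~\ref{cor.Godbillon} on both levels, and invoke Nori's results in \cite{Nori} as the one nontrivial input, finishing with group-theoretic bookkeeping. The only cosmetic difference is that the paper quotes Nori as giving cyclicity of the kernel $K$ of $\pi_1(V\smallsetminus B)\to\pi_1(V)$ and of its counterpart $K'$ upstairs (which, since $\pi_1(V)=1$ and $K$ is normally generated by a meridian, is interchangeable with your abelianness formulation) and then concludes by a diagram chase, whereas you conclude by the direct computation $\pi_1(V')=\langle\gamma^2\rangle/\langle\gamma^2\rangle=1$.
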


\begin{proof}
{Consider the commutative diagram
 \begin{equation}\label{diagram.B.square}
  \xymatrix@R-10pt{
   1 \ar[r] & K' \ar[d]^\iota \ar[r]  &
   \pi_1(V' \smallsetminus p^{-1}(B))
   \ar[d]^{\iota'} \ar[r] & \pi_1(V')
  \ar[d]_j \ar[r] & 1 \\
   1 \ar[r] & K \ar[r] & \pi_1(V \smallsetminus B)
   \ar[r] & \pi_1(V) \ar[r] & 1,}
 \end{equation}
 where
  the horizontal exact sequences are exact
 at the right hand side by \linebreak
 Lemma~\ref{cor.Godbillon}.
 Since $p$ restricted to $V' \smallsetminus p^{-1}(B)$
 is a $2$-sheet unramified cover,
 $\iota'$ is injective
 (and so is $\iota$) and
 its cokernel  is isomorphic to $\mathbb Z_2$.
 Since $V$ is simply connected, chasing the diagram
 we get the following short exact sequence
 \begin{equation*}
  0 \longrightarrow L \longrightarrow K/K'
  \longrightarrow \mathbb Z_2 \longrightarrow 0,
 \end{equation*}
 where $L$ is the kernel of $j$.}

{By \cite[1.2, 1.4 B]{Nori}, $K$ is a cyclic group,
 so is $K'$ and the generator of $K'$ maps
 to the square of the generator of $K$. Therefore
 $K/K'$ is also isomorphic to $\mathbb Z_2$, so
 $L=0$.}
\end{proof}

 \smallskip

{We further explore the topological properties of
the Horikawa varieties. We prove that any Horikawa variety is
simply connected.}
The similarities with the surface case is striking.
The fact that just two invariants $p_g(X)$ and $K_{X}^n$ determine
the topology entirely is rather remarkable.
 {Note that
 a strong Horikawa variety together with its
canonical divisor is a hyperelliptic polarized variety as defined
by Fujita in \cite[Definition 1.1]{Fujita}
(see also \cite[Definition 0.1]{BGG}). Then
 Theorem~\ref{topology}
extends what
\cite[Corollary 5.17]{Fujita} says with respect to the
simple connectedness of strong Horikawa varieties. Indeed,
\cite[Corollary 5.17]{Fujita} states the simply connectedness
only of
strong Horikawa varieties whose canonical
morphism is a double cover of a (smooth) rational normal scroll
with connected branch divisor,
while  Theorem~\ref{topology}
states that all Horikawa varieties are simply connected.}

\begin{thm}\label{topology}
Any Horikawa variety $X$
is simply connected.
\end{thm}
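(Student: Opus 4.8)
The plan is to reduce the simple connectedness of an arbitrary Horikawa variety $X$ to Lemma~\ref{Nori2} on double covers of simply connected smooth varieties, using the fact (Proposition~\ref{flat}) that $X$ and its canonical model $\overline X$ have isomorphic fundamental groups (both because $X \to \overline X$ is a crepant resolution with connected fibers, which does not change $\pi_1$, and because we may equivalently work with $\overline X$). Since $X$ is simply connected if and only if $\overline X$ is, I would work throughout with the finite degree $2$ canonical morphism $\phi \colon \overline X \to Y \subset \PPP^N$ onto a variety of minimal degree. The strategy is to first treat the case where $Y$ is smooth, where $\phi$ is flat and the double-cover machinery applies cleanly, and then handle the singular $Y$ by passing to the desingularization diagram \eqref{CD}, exactly as in the proof of Theorem~\ref{regular}.

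\textbf{The smooth case.} When $Y$ is smooth it is $\PPP^n$, a smooth quadric, or a smooth rational normal scroll, all of which are smooth rational varieties and hence simply connected. The morphism $\phi$ is a flat double cover branched along a divisor $B$ with $\mathcal O_Y(B) = \mathcal L^{\otimes 2}$. First I would arrange that we may apply Lemma~\ref{Nori2}: that lemma requires $\overline X$ and $B$ to be smooth and $B$ irreducible. For a general Horikawa variety these need not hold, so the real content is to reduce to a situation where they do. The cleanest route is to use the structure of the linear systems: $B \in |\mathcal L^{\otimes 2}|$ and, by Bertini applied to the very ample $\mathcal O_Y(1)$ and the positivity of $\mathcal L$ coming from \eqref{equation.regularity}, a general such double cover has smooth total space and smooth branch divisor; then I would invoke the deformation invariance of the fundamental group together with Proposition~\ref{deformation}, which guarantees that the general deformation of $\phi$ is again a canonical degree $2$ morphism onto a variety of minimal degree, so that $\pi_1$ is unchanged along the family and it suffices to compute it for the general (smooth) member. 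Once smoothness is secured, Lemma~\ref{Nori2} gives $\pi_1(\overline X) = 1$ directly from $\pi_1(Y) = 1$, provided $B$ is irreducible; if $B$ is reducible one argues via Nori's results \cite{Nori} that the relevant group $K$ is still cyclic, exactly as in the proof of Lemma~\ref{Nori2}.

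\textbf{The singular case.} When $Y$ is a cone I would pass to the commutative diagram \eqref{CD} constructed in the proof of Theorem~\ref{regular}: the minimal desingularization $q \colon W \to Y$ (replacing the surface $W_2$ there by the higher dimensional desingularization of the cone $Y$), the birational $\overline q \colon Z \to \overline X$, and the flat degree $2$ cover $p \colon Z \to W$ branched along a divisor on the smooth variety $W$. The key observation is that $W$ is a projective bundle over $\PPP^1$ (hence smooth and simply connected) and that $\overline q$ is birational, so $\pi_1(Z) = \pi_1(\overline X)$ since birational morphisms between normal proper varieties induce isomorphisms on $\pi_1$. Thus it suffices to show $Z$ is simply connected, and $Z \to W$ is precisely the kind of double cover to which Lemma~\ref{Nori2} (after the same Bertini/deformation smoothing) applies. \textbf{The main obstacle} I anticipate is exactly this smoothing/irreducibility issue: neither $\overline X$ nor the branch divisor is smooth in general, and the branch divisor may be reducible, so the bulk of the argument is not the topology but the careful reduction — via generality of the hyperplane sections, the deformation invariance of $\pi_1$ supplied by Proposition~\ref{deformation}, and Nori's cyclicity results — to the smooth irreducible situation where Lemma~\ref{Nori2} can be quoted verbatim. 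I expect the verification that $\pi_1$ is genuinely constant in the relevant families, and that the singular fibers do not contribute extra loops, to be the technically delicate part.
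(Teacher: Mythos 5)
Your proposal follows the same general template as the paper (pass to $\overline X$, analyze the degree $2$ cover via Lemma~\ref{Nori2}, smooth the branch divisor by a deformation, treat singular $Y$ through the diagram \eqref{CD}, and descend conclusions through resolutions), but it has a genuine gap at exactly the point where the real difficulty lies. In the scroll case you assert that, by Bertini and ``the positivity of $\mathcal L$,'' a general member of $|\mathcal L^{\otimes 2}|$ is smooth and irreducible, so that the cover can be deformed into the situation of Lemma~\ref{Nori2}. This fails: on a rational normal scroll $\mathcal L = \mathcal O_Y(1)\otimes \mathcal O_Y(-K_Y)$ need not be ample or even base-point-free, and $|B|=|\mathcal L^{\otimes 2}|$ can have a \emph{fixed component} (the analogue of the minimal section $C_0$; see \eqref{ramification}, where $\beta$ can equal $(\alpha-1)e$, and Case 3 of the paper's proof, where $C_0$ is forced to lie in $B$ whenever $e>2$ because $Z_2$ is normal). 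No Bertini or deformation argument can remove a fixed component, so the general member of $|B|$ is neither smooth nor irreducible precisely in these cases. Your fallback --- that for reducible $B$ the kernel $K$ is ``still cyclic'' by Nori --- is unsubstantiated and false in general: for a reducible branch divisor $K$ is normally generated by one meridian per component. The paper handles this case by an entirely different mechanism: it writes $B=C_0+B'$ with $B'$ big and base-point-free meeting $C_0$ transversally, notes that $Y_2\smallsetminus C_0$ is an $\mathbb A^1$-fibration over $\mathbb P^1$ and hence simply connected, applies Lemma~\ref{Nori2} over this complement, and then fills $C_0$ back in with Lemma~\ref{cor.Godbillon}. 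Nothing in your proposal plays this role.

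Two further overclaims would need repair. First, ``$\pi_1$ is unchanged along the family'' is not true when the central fiber is singular; what holds (and is all one needs) is a \emph{surjection} $\pi_1(\mathcal X_t)\twoheadrightarrow \pi_1(\mathcal X_0)$, obtained in the paper from \cite[Theorem I.8.8]{BHPV} together with \cite[Lemma 1.5.C]{Nori}; Proposition~\ref{deformation}, which you cite for this, says nothing about fundamental groups. Second, ``birational morphisms between normal proper varieties induce isomorphisms on $\pi_1$'' is false in general (the resolution of the projective cone over an elliptic curve has $\pi_1=\mathbb Z^2$ while the cone is simply connected); one needs the singularities to be canonical/klt and then \cite[Theorem 1.1]{Takayama} or \cite[Theorem 7.8]{Kollar}, which in turn requires knowing that $Z$ (the normalized fiber product) has canonical singularities --- a fact the paper establishes only after the adjunction computations of Theorem~\ref{regular}, and only for surfaces. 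This points to the structural difference: the paper first reduces to a \emph{surface} double cover $\phi_2\colon \overline X_2 \to Y_2$ via hyperplane sections, where the explicit Hirzebruch geometry (the bound $e\le 4$, the decomposition $B=C_0+B'$, the transversality statements) is available, proves simple connectedness there, and only then lifts to $\overline X$ by the Lefschetz hyperplane section theorem. Working directly in dimension $n$, as you propose, would require redoing all of these auxiliary facts for higher-dimensional scrolls and cones, and your proposal does not supply them.
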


\begin{proof}
We will work with the canonical model $\overline X$
of $X$ and,
by taking the intersections of $n-2$ general
hyperplane sections, from the canonical morphism $\phi$
of
$\overline X$ we obtain a finite morphism
\begin{equation*}
 \phi_2: \overline X_2 \longrightarrow Y_2,
\end{equation*}
 of degree $2$
onto a surface of minimal degree $Y_2$, as we did in
the proof of Theorem~\ref{regular}.
{Our goal now is to prove
that $\overline X_2$ is simply connected.} We split the argument in
several cases.

\smallskip
\noindent {\bf Case 1:} $\overline X_2$ and $Y_2$ are smooth.
Therefore $Y_2$ is either
$\mathbb {P}^{2}$,
the Veronese surface in $\mathbb P^5$ or a
Hirzebruch surface embedded as a rational normal scroll. Let $B$ be the
branch divisor of $p$. Since $\overline X_2$ is smooth, so is $B$.
The case in which $B$ is ample, that includes
the cases of $Y_2$ being $\mathbb {P}^{2}$ or
the Veronese surface in $\mathbb P^5$, is straightforward.
{Indeed, by \cite[Corollary 2.7]{Nori},
$\overline X_{2}$ is simply connected.}

\smallskip
Now we deal with the case of $Y_2$ being a rational normal
scroll and $B$ not ample.
Then $Y_2$ is a Hirzebruch surface embedded by
$|C_0+mf|$, where $C_0$ is its minimal section,
 $f$ is a fiber, $C_0^2=-e$ and $m \geq e+1$.
is a Hirzebruch surface.
By \eqref{ramification},
$$B \sim 2(n+1)C_{0}+2((n-1)m+e+2)f).$$
{Let
$B \sim \alpha C_0 + \beta f$.}
{Since $B$  is smooth,
then either $\beta=\alpha e$ or $\beta=(\alpha-1)e$.}
{If $\beta=\alpha e$, then $B$
is big and base point free,}
{so $B$ is irreducible.
Since $Y_2$ is simply connected, by Lemma~\ref{Nori2}, so is $\overline X_2$.}

\smallskip
{If $\beta=(\alpha-1)e$,
then $B=C_0+B_1$, with $B_1$ big and base point free
(thus $B_1$ is smooth and irreducible)}
{and
$C_0$ and $B_1$ disjoint. Since $Y_2 \smallsetminus C_0$
is an $\mathbb A^1$-fibration over $\mathbb P^1$,
$Y_2 \smallsetminus C_0$
is simply connected.}
{
By Lemma~\ref{Nori2}, $\overline X_2
   \smallsetminus p^{-1}(C_0)$ is also simply connected
 and,
 by Lemma~\ref{cor.Godbillon},
so is $\overline X_2$.
Thus we have showed that, if $Y_2$ is a smooth rational normal
scroll and $B$ is not ample, then $\overline X_2$ is also
simply connected.}

 \smallskip

\noindent
{\bf Case 2:} $\overline X_2$ is singular
and $Y_2$ is smooth.\\
{Then, as in Case 1
of the proof of Theorem~\ref{regular},
$\phi_2$ is a flat
double cover determined
by a branch divisor $B$ (which is necessarily singular).}

\smallskip
\noindent {
{\bf Case 2.1.} $Y_2$ is  $\mathbb P^2$
or a Veronese surface in $\mathbb P^5$.}

\noindent
{Although
 $B$  is  singular, since $B$ is base--point--free,
a general member of $|B|$ is smooth and, in fact,
we can consider a
deformation $\mathcal B$ of $B$ over a disc $T$, such that
$\mathcal B_0=B$, $\mathcal B$ is smooth
and $\mathcal B_t$ is smooth for all $t \neq 0$.
Using $\mathcal B$ as a relative branch divisor, we can
construct a deformation
\begin{equation*}
\mathcal X \longrightarrow Y_2 \times T,
\end{equation*}
 flat over $T$  of
$\phi_2$,
 such that $\mathcal X_0=\overline X_2$,
 $\mathcal X$ is smooth
and $\mathcal X_t$ is smooth for all $t \neq 0$.}
{Shrinking $T$ if
necessary,
by \cite[Theorem I.8.8]{BHPV} and \cite[Lemma 1.5.C]{Nori},}
{$\pi_1(\mathcal X_t)$ surjects onto
$\pi_1(\mathcal X_0)$ for any general $t$ in $T$.
Since $\mathcal X_t$ is a finite double
cover of $Y_2$ and $\mathcal X_t$ is smooth, by the
arguments used in Case 1, $\mathcal X_t$
is simply connected, and so is $\overline X_2$.}

\smallskip

\noindent
{{\bf Case 2.2.} $Y_2$ is
a rational normal scroll.}

\noindent {Then $Y_2$
is a Hirzebruch surface.}
{If
$B$ is big and base--point--free, then the general member of
$|B|$ is smooth. Therefore, arguing as in Case 2.1, we can deform
$\overline X_2$ to a smooth, simply connected surface
$\mathcal X_t$, so $\overline X_2$ is also simply
connected.}
{Now assume $B$ is not big and base--point--free and,
 with the same notation of Case 1,
 let $B \sim \alpha C_0 + \beta f$.
 Then, by \eqref{ramification}, $\beta < \alpha e$.}
Since $X_2$ is normal,  $C_0$ cannot occur in the branch locus
with multiplicity more than $1$, so $B\thicksim B' +C_0$,
{$\beta \geq (\alpha-1)e$,}
and $B' \cdot C_0 \geq 0$.
Then, {by \eqref{ramification},}
$B'$ {is big and base--point--free and}
any general member
in $|B'|$ is irreducible
and nonsingular.
In addition, since $H^1(\mathcal O_Y(B'-C_0))=0$,
{we may assume that any general member
of $|B'|$
 intersects $C_0$
transversally.}
{Thus there is a family of divisors $\mathcal B_t$ and $\mathcal B'_t$
of $Y_2$ over a disc $T$ such that $\mathcal B_0=B$,
$\mathcal B_0'=B'$ and, for all $t \in T$, $t \neq 0$,
the divisor $\mathcal B'_t$ is smooth and meets $C_0$ transversally.
Let
\begin{equation*}
\Phi: \mathcal X \longrightarrow Y_2 \times T,
\end{equation*}
be the double cover of $Y_2 \times T$, branched along the total space
of the family formed by the divisors $\mathcal B_t$.
As in Case 2.1, $\Phi$ is  a deformation,
flat over $T$, of
$\phi_2$.
Let
\begin{equation*}
 \Psi: \widetilde{\mathcal X} \longrightarrow \mathcal X
\end{equation*}
be the minimal desingularization of $\mathcal X$.}
Since $C_0$ is the section
of the rational ruled surface $Y_2$,
$Y_{2}\smallsetminus C_0$ is an
${\mathbb A^1}$-fibration
over  ${\mathbb P^1}$,
hence  $Y_{2}\smallsetminus C_0$ is simply connected.
{Then, by Lemma~\ref{Nori2},
$\mathcal X_t \smallsetminus\Phi_t^{-1}(C_0)$ is also
simply connected.
Since $\widetilde{\mathcal X}_t \smallsetminus
\Psi^{-1}(\Phi_t^{-1}(C_0))$
and $\mathcal X_t \smallsetminus\Phi_t^{-1}(C_0)$ are isomorphic,
we have $\widetilde{\mathcal X}_t \smallsetminus
\psi^{-1}(\Phi_t^{-1}(C_0))$
is simply connected and, by Lemma~\ref{cor.Godbillon},
so is $\widetilde{\mathcal X}_t$.
Then, by \cite[Lemma 1.5.C]{Nori},
$\pi_1(\widetilde{\mathcal X}_t)$ surjects onto
$\pi_1(\widetilde{\mathcal X})$, so $\widetilde{\mathcal X}$ is also
simply connected. Since
$\mathcal X$ has canonical singularities,}
{by \cite[Theorem 1.1]{Takayama},
$\mathcal X$ is simply connected.
Shrinking $T$ if
necessary,
by \cite[Theorem I.8.8]{BHPV},
$\pi_1(\mathcal X)$ and
$\pi_1(\mathcal X_0)$ are isomorphic, so
${\mathcal X}_0=\overline X_2$
is simply connected.}

\smallskip
\noindent
{\bf Case 3.} $Y_2$ is singular.
In this case we consider the desingularization diagram~\eqref{CD}.
{Since the singularities
of $\overline X_2$ and $Z_2$ are canonical,}
{it follows from   \cite[Theorem 7.8]{Kollar}
or \cite[Theorem 1.1]{Takayama}
that
$\pi_1(\overline X_2)$ and $\pi_1(Z_2)$ are isomorphic.}
We consider now the double
cover
\begin{equation*}
  p:Z_{2}\longrightarrow W_{2}
\end{equation*}
in the diagram~\eqref{CD}.
As calculated in Theorem~\ref{regular},
the branch divisor $B$ of $p$
satisfies
\begin{equation*}
B \sim 2(n+1)C_0+2(ne+2)f.
\end{equation*}
 {If $e=2$, then
$B$ is big and base--point--free, so the general member of $|B|$ is smooth.
Arguing like in Case 2.1 we conclude that $Z_2$ is simply
connected and,
as observed above, so is $\overline X_2$.
If $e > 2$,} since $Z_2$ is normal,
$C_0$ occurs with multiplicity $1$
in $B$. This shows that $B=B'+C_0$,
where $B'$ is linearly equivalent to $(2n+1)C_0+2(ne+2)f$.
As we saw in \eqref{eq.for.boundedness1} and
\eqref{eq.for.boundedness2}, we have
$e \leq 4$. Then $B'\cdot C_0 \geq 0$,
$C_0$ is the fixed part of $|B|$ {and
$B'$ is big and base--point--free, so the general member
of $|B'|$ is a smooth irreducible curve.
In addition, $H^1(B' -C_0)=0$, so the general member
of $|B'|$ meets $C_0$ transversally.
Then arguing as in Case 2.2
we conclude
that $Z_2$ is simply connected and so is $\overline X_2$.}

\smallskip
{Thus we have seen that, in all three cases,
$\overline X_2$ is
simply connected.
Then, by Lefschetz hyperplane section theorem, $\overline X$ is
also simply connected and, by \cite[Theorem 1.1]{Takayama},
so is $X$.}
\end{proof}

\section{Birationality and projective normality}

Recall that,
if $X$ is a Horikawa variety, then
$K_{X}$ is base point free and the canonical morphism of $X$ maps $X$
onto a variety of minimal degree.
We now study the pluricanonical morphisms of Horikawa
varieties. We want to know when a pluricanonical morphism of $X$ is birational and maps
$X$ to a projectively normal variety. More precisely, we want to know when a pluricanonical morphism
of the canonical model of $X$ is an embedding and its image is projectively normal.

\smallskip
We will use the following notation throughout this section:

\begin{notation}\label{notation.section.proj.normality}
Let $X$ be a
Horikawa variety of dimension $n$.

{According to the context, $X_n$ will be $X$ or $\overline X$}.
Let $X_{1}\subset\cdots\subset X_{n^{\prime}}\subset\cdots\subset
X_{n}$ be
irreducible subsequent $n^{\prime}$-dimensional, complete
intersections of {general} members of
$|K_{X}|$ {or
of $|K_{\overline X}|$}.
Note that $X_{1}$ is a {smooth and irreducible} curve,
let us call it $C$.
{According to the context,}
we denote the line bundle, on $X_{n'}$ or on $\overline X_{n^{\prime}}$,
associated to
$K_{X}|_{X_{n^{\prime}}}$ or
$K_{\overline X}|_{\overline X_{n^{\prime}}}$,
by $L_{n^{\prime}}$. In
this notation, $L_{n}$ is $\mathcal O_X(K_{X})$ or
$\mathcal O_{\overline X}(K_{\overline X})$ accordingly.
Finally, let
\[
H^{0}(L_{n^{\prime}}^{\otimes s})\otimes
H^{0}(L_{n^{\prime}}^{\otimes t})\xrightarrow{\ \alpha(s,t;n^{\prime})\ }
H^{0}(L_{n^{\prime}}^{\otimes s+t})
\]
be the usual multiplication map of global sections.
\end{notation}

\bigskip

\begin{prop}\label{neverbirational}
Let $X$ be a Horikawa variety of dimension $n$.
 {If $1 \leq s \leq n$ or if $s=n+1$ and  $p_{g}(X)=n+1$, then $|sK_{X}|$
 does not induce
 a
 birational morphism.
}
\end{prop}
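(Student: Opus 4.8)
The plan is to pass to the canonical model, restrict the pluricanonical system to a general complete-intersection curve, and show that in the stated range this restricted system is composed with the hyperelliptic involution of that curve, so it cannot separate a general point from its conjugate.

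First I would reduce to $\overline X$. Since the canonical rings of $X$ and $\overline X$ coincide, $H^0(X, sK_X) = H^0(\overline X, sK_{\overline X})$ for all $s$, and the two pluricanonical maps are identified under the birational map $X \dashrightarrow \overline X$; in particular $|sK_X|$ is birational if and only if $|sK_{\overline X}|$ is. By Proposition~\ref{flat} and \cite[Proposition 2.5]{Kobayashi} the canonical morphism $\phi\colon \overline X \to Y$ is finite of degree $2$ onto a variety of minimal degree, and by construction of the canonical morphism $\mathcal O_{\overline X}(K_{\overline X}) = \phi^*\mathcal O_Y(1)$, where $\mathcal O_Y(1)$ is the hyperplane bundle of $Y \subset \mathbb P^{p_g-1}$. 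Write $d := \deg Y = p_g(X) - n$.

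Next, using Notation~\ref{notation.section.proj.normality}, I would restrict to the general curve $C = \overline X_1$, the complete intersection of $n-1$ general members of $|K_{\overline X}|$, which is smooth by \cite[Lemma 6.6]{CKM}. Its image $Y_1 = \phi(C)$ is a degree-$d$ curve spanning a $\mathbb P^{d}$, hence a rational normal curve isomorphic to $\mathbb P^1$, and $\psi := \phi|_C \colon C \to \mathbb P^1$ is a double cover; let $\tau$ denote its hyperelliptic involution. Then $L_1 = \mathcal O_{\overline X}(K_{\overline X})|_C = \psi^*\mathcal O_{\mathbb P^1}(d)$, and iterating adjunction along $\overline X_1 \subset \cdots \subset \overline X_n$ gives $K_C = nL_1$, so that $2g(C)-2 = n\deg L_1 = 2nd$ and $g(C) = nd+1$. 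Consequently $\psi_*\mathcal O_C = \mathcal O_{\mathbb P^1} \oplus \mathcal O_{\mathbb P^1}(-(nd+2))$, and by the projection formula
\[
H^0(C, sL_1) = H^0\!\big(\mathbb P^1, \mathcal O(sd)\big) \oplus H^0\!\big(\mathbb P^1, \mathcal O(sd-nd-2)\big),
\]
the two summands being the $\tau$-invariant and $\tau$-anti-invariant subspaces. The decisive point is that the anti-invariant summand vanishes exactly in the stated range: $H^0(\mathbb P^1, \mathcal O(sd-nd-2)) = 0$ if and only if $(s-n)d \le 1$, which holds when $1 \le s \le n$ (then $(s-n)d \le 0$) and when $s = n+1$ with $d = 1$, i.e.\ $p_g(X) = n+1$. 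In this range every section of $sL_1$ is pulled back from $\mathbb P^1$, and is therefore $\tau$-invariant. Hence, restricting sections of $sK_{\overline X}$ to $C$, for any $t_1, t_2 \in H^0(\overline X, sK_{\overline X})$ the ratio $(t_1/t_2)|_C$ is a pullback from $\mathbb P^1$ and so takes equal values at $\tau$-conjugate points. For a general $x \in \overline X$ one chooses such a $C$ through $x$ (the curves $C$ sweep out $\overline X$ as the linear sections vary); then $x' := \tau(x)$ is the second point of $C$ over $\phi(x)$, and $x' \ne x$ because $\phi$ has degree $2$. Thus the map defined by $|sK_{\overline X}|$ identifies the distinct points $x$ and $x'$, so it is at least $2$-to-$1$ and cannot be birational.

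The step I expect to require the most care is the geometric input underlying the last paragraph: that a general point of $\overline X$ and its conjugate lie on a common general curve section $C$ with $\psi$ the expected double cover. When $Y$ is a cone this needs $\phi(x)$ to be chosen away from the vertex, which is harmless since $x$ is general; and one must check that the general $C$ is smooth and irreducible with $\psi$ of degree $2$ onto a smooth rational normal curve, for which the canonical-singularity control of \cite[Lemma 6.6]{CKM} together with Bertini suffice. The remaining computations — the adjunction identity $K_C = nL_1$ and the eigenspace splitting of $H^0(C,sL_1)$ — are routine.
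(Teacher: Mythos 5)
Your proof is correct, but it takes a genuinely different route from the paper's in the key step. Both arguments restrict to the general complete-intersection curve $C$, show that the restricted $s$-canonical system is composed with a degree-$2$ map, and finish with the same sweeping argument; the difference is \emph{how} the degree-$2$ behavior is detected. You feed in Kobayashi's structure theorem (the canonical morphism is a finite double cover of a variety of minimal degree), restrict it to get the explicit double cover $\psi\colon C \to \mathbb P^1$, and read off everything from the eigenspace decomposition $H^0(C, sL_1) = H^0(\mathcal O_{\mathbb P^1}(sd)) \oplus H^0(\mathcal O_{\mathbb P^1}((s-n)d-2))$: the stated range is exactly where the anti-invariant summand dies, so all sections are $\tau$-invariant and the map factors through $\psi$. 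The paper never invokes the double-cover structure in this proof; instead it computes $h^0(L_1) = p_g(X) - n + 1$ using the regularity of the intermediate complete intersections $X_{n'}$ (established in the proof of Theorem~\ref{regular}), observes that the Horikawa equality forces $\deg L_1 = 2h^0(L_1) - 2$, and applies Clifford's theorem to conclude that $C$ is hyperelliptic with $L_1$ a multiple of the $g^1_2$; the two ranges are then handled by classical hyperelliptic curve theory ($|sL_1|$ is special for $s \le n$, and $|K_C + g^1_2|$ has degree $2$ when $L_1 = g^1_2$). What each approach buys: the paper's route is intrinsic and numerical---it re-derives hyperellipticity from the extremal equality alone and ties the proposition to Theorem~\ref{regular}---while yours is uniform across both cases, needs neither Clifford nor the regularity theorem (not even surjectivity of the restriction map, since invariance of the image is automatic), and is essentially the same $A(m)\oplus A'(m)$ splitting technique the paper itself deploys later in Steps 4 and 5 of the proof of Theorem~\ref{projnormal}. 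One small point of care which you correctly flag: the curve through a general point must be chosen general among those through that point (or one argues via the irreducible incidence variety) so that it retains smoothness and the double-cover structure; the paper's own phrasing glosses over this with the same brevity.
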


\begin{proof}
{From \eqref{eq.restriction.sequence}
we obtain, for all $2 \leq n' \leq n$,}
\begin{equation}\label{eq.restriction.sequence2}
 {0 \longrightarrow \mathcal O_{X_{n'}} \longrightarrow
 L_{n'} \longrightarrow L_{n'-1} \longrightarrow 0.}
\end{equation}
{From \eqref{eq.restriction.sequence}, in the proof of
Theorem~\ref{regular} we also saw that,
for all $2 \leq n' \leq n$, $X_{n'}$ is regular. This together with
\eqref{eq.restriction.sequence2} implies
\begin{equation*}
h^0(L_1)=h^0(\mathcal O_X(K_X))-n+1.
\end{equation*}
Since $X$ is a Horikawa variety,
\begin{equation*}
\mathrm{deg}L_1=2h^0(L_1)-2,
\end{equation*}
so, by Clifford's theorem,
$C$ is hyperelliptic and $L_1$ is a multiple of the $g^1_2$ of $C$.
By adjunction, $L_1^{\otimes n}=\mathcal O_C(nK_{X}|_{C})=\mathcal O_C(K_{C})$ so,
if $1 \leq s \leq n$, then $L_1^{\otimes s}$ is in the
special range of $C$ and
$|L_1^{\otimes s}|$ induces a morphism of degree $2$.
If $p_g(X)=n+1$,
then $\deg L_1=K_X^n=2$, so $L_1$ is the $g^1_2$ of $C$.
By adjunction we have $L_1^{\otimes n+1}=\mathcal O_C((n+1)K_{X}|_{C})=
\mathcal O_C(K_{C})\otimes L_1$.
Since $C$ is hyperelliptic, $|L_1^{\otimes n+1}|$ also induces
a morphism of degree $2$ in this case.
Therefore, under the hypothesis of the statement,
the restriction of the $s$-canonical morphism of $X$ to $C$ has
degree $2$. Since
$C$ is the intersection of $n-1$ general divisors of $|K_X|$, there is an open set of $X$ where
the $s$-canonical morphism of $X$ has degree $2$, hence it is not birational.}
\end{proof}

\medskip Before we deal with the birationality and projective normality of
pluricanonical systems of a Horikawa variety, we will state a particular case of
a very useful more general result (see \cite[Observation 1.2]{Calabi-Yau}).

\begin{lem}\label{divideandrule}
Let $X$ be a projective variety, $E$ a coherent sheaf, $\mathcal{L}_1$, $\mathcal{L}_2$,..., $\mathcal{L}_r$ be line bundles on $X$.
Let $$\gamma \colon H^0(E)\otimes H^0(\mathcal{L}_{1}\otimes \mathcal{L}_{2}\otimes\cdots\otimes \mathcal{L}_r) \longrightarrow H^0(E\otimes \mathcal{L}_{1}\otimes \mathcal{L}_{2}\otimes\cdots\otimes \mathcal{L}_r)$$ be the multiplication map of global sections.
If the multiplication maps
$$\alpha_j \colon H^0(E\otimes  \mathcal{L}_1\otimes \cdots \otimes \mathcal{L}_{j-1})\otimes H^0(\mathcal{L}_{j}) \longrightarrow H^0(E\otimes \mathcal{L}_1\otimes \mathcal{L}_2\otimes \cdots \otimes \mathcal{L}_{j})$$ are surjective for all $1 \le j \le r$,
then $\gamma$ is surjective.
\end{lem}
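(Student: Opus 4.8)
The plan is to prove the surjectivity of $\gamma$ by a telescoping induction on $r$, peeling off one line bundle $\mathcal L_j$ at a time and using the hypotheses $\alpha_j$ to advance. The key observation is that $\gamma$ factors as a composition of the $\alpha_j$'s tensored with identities, and surjectivity of a composition follows from surjectivity of each factor.

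\smallskip

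First I would set up the telescoping. For each $0 \le k \le r$, let $E_k := E \otimes \mathcal{L}_1 \otimes \cdots \otimes \mathcal{L}_k$ (so $E_0 = E$ and $E_r = E \otimes \mathcal{L}_1 \otimes \cdots \otimes \mathcal{L}_r$), and write $\mathcal M := \mathcal{L}_1 \otimes \cdots \otimes \mathcal{L}_r$. The plan is to compare $\gamma$ with the following chain of multiplication maps. Consider, for each $1 \le j \le r$, the map
\[
\beta_j \colon H^0(E_{j-1}) \otimes H^0(\mathcal{L}_j \otimes \cdots \otimes \mathcal{L}_r)
\longrightarrow H^0(E_{j-1} \otimes \mathcal{L}_j) \otimes H^0(\mathcal{L}_{j+1} \otimes \cdots \otimes \mathcal{L}_r)
= H^0(E_j) \otimes H^0(\mathcal{L}_{j+1} \otimes \cdots \otimes \mathcal{L}_r),
\]
obtained by applying $\alpha_j$ to the first factor together with the natural factorization $H^0(\mathcal{L}_j \otimes \cdots \otimes \mathcal{L}_r)$ mapping into $H^0(\mathcal L_j) \otimes H^0(\mathcal{L}_{j+1}\otimes \cdots \otimes \mathcal{L}_r)$, and then multiplying. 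The point is that the composite $\beta_r \circ \cdots \circ \beta_1$, from $H^0(E) \otimes H^0(\mathcal M)$ to $H^0(E_r)$, agrees with $\gamma$, since tensor multiplication of global sections is associative and the intermediate regroupings recombine to the single multiplication defining $\gamma$.

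\smallskip

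Next I would verify surjectivity of each $\beta_j$. The factor map $\alpha_j \colon H^0(E_{j-1}) \otimes H^0(\mathcal{L}_j) \to H^0(E_j)$ is surjective by hypothesis; tensoring a surjection of vector spaces with the identity on $H^0(\mathcal{L}_{j+1} \otimes \cdots \otimes \mathcal{L}_r)$ preserves surjectivity, so $\beta_j$ is surjective provided the grouping map $H^0(\mathcal{L}_j \otimes \cdots \otimes \mathcal{L}_r) \to H^0(\mathcal{L}_j) \otimes H^0(\mathcal{L}_{j+1}\otimes\cdots\otimes\mathcal{L}_r)$ is dealt with correctly. Cleaner still: I would phrase the induction directly. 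For the base case $r=1$, $\gamma = \alpha_1$ is surjective by hypothesis. For the inductive step, apply the inductive hypothesis with the line bundles $\mathcal{L}_2, \ldots, \mathcal{L}_r$ and the coherent sheaf $E \otimes \mathcal{L}_1$ to conclude that $H^0(E\otimes \mathcal{L}_1) \otimes H^0(\mathcal{L}_2 \otimes \cdots \otimes \mathcal{L}_r) \to H^0(E_r)$ is surjective; then combine this with the surjectivity of $\alpha_1 \colon H^0(E) \otimes H^0(\mathcal{L}_1) \to H^0(E \otimes \mathcal{L}_1)$ (tensored with the identity on $H^0(\mathcal{L}_2 \otimes \cdots \otimes \mathcal{L}_r)$) and the associativity of multiplication to factor $\gamma$ through these two surjections.

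\smallskip

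The only genuinely delicate point—and the step I would watch most carefully—is the bookkeeping showing that $\gamma$ really is the composite of these maps, i.e.\ that regrouping $H^0(E) \otimes H^0(\mathcal{L}_1 \otimes \cdots \otimes \mathcal{L}_r)$ via $H^0(E) \otimes H^0(\mathcal{L}_1) \otimes H^0(\mathcal{L}_2\otimes\cdots\otimes\mathcal{L}_r)$ is compatible with the multiplication maps. This is a formal consequence of the associativity and commutativity of the cup/tensor product on sections, but one must be careful that the natural map $H^0(\mathcal{L}_1 \otimes \cdots \otimes \mathcal{L}_r)$ factoring through $H^0(\mathcal{L}_1)\otimes H^0(\mathcal{L}_2\otimes\cdots\otimes\mathcal{L}_r)$ is used in the \emph{source}, where one is free to precompose with any lift, rather than being asserted surjective (which it need not be). Everything else is routine linear algebra: a composition of surjections is surjective, and tensoring a surjection with a fixed vector space preserves surjectivity.
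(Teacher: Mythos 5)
Your ``cleaner still'' induction is a correct proof, and it is essentially the standard one: note the paper itself does not prove this lemma but quotes it as a special case of \cite[Observation 1.2]{Calabi-Yau}, whose proof is exactly your inductive step. Namely, the induction hypothesis applied to $E\otimes\mathcal L_1$ and $\mathcal L_2,\dots,\mathcal L_r$ (whose required hypotheses are precisely $\alpha_2,\dots,\alpha_r$) gives surjectivity of $\gamma'\colon H^0(E\otimes\mathcal L_1)\otimes H^0(\mathcal L_2\otimes\cdots\otimes\mathcal L_r)\to H^0(E\otimes\mathcal L_1\otimes\cdots\otimes\mathcal L_r)$; then $\gamma'\circ(\alpha_1\otimes\mathrm{id})$ is surjective and, by associativity of multiplication of sections, it coincides with $\gamma\circ(\mathrm{id}_{H^0(E)}\otimes\mu)$, where $\mu\colon H^0(\mathcal L_1)\otimes H^0(\mathcal L_2\otimes\cdots\otimes\mathcal L_r)\to H^0(\mathcal L_1\otimes\cdots\otimes\mathcal L_r)$ is the multiplication map; hence $\operatorname{im}\gamma\supseteq\operatorname{im}\bigl(\gamma\circ(\mathrm{id}\otimes\mu)\bigr)$, which is everything.

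However, the first version of your argument, the telescoping through the maps $\beta_j$, is not salvageable and should be deleted: the $\beta_j$ are not defined, because there is no ``natural factorization'' $H^0(\mathcal L_j\otimes\cdots\otimes\mathcal L_r)\to H^0(\mathcal L_j)\otimes H^0(\mathcal L_{j+1}\otimes\cdots\otimes\mathcal L_r)$. The natural map goes the other way (it is $\mu$), it is in general not surjective (e.g.\ two degree-one line bundles on an elliptic curve), and an individual section of a tensor product need not be a sum of products of sections; in particular ``lifts'' need not exist, so the phrase ``precompose with any lift'' in your last paragraph is unfounded and should be reworded. Similarly, ``factor $\gamma$ through these two surjections'' has the logic backwards: what one shows is that the composite of the two surjections factors \emph{through} $\gamma$ (it equals $\gamma$ precomposed with $\mathrm{id}\otimes\mu$), and that is what forces $\gamma$ to be surjective --- factoring $\gamma$ through a surjection would prove nothing. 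With the $\beta_j$ paragraphs removed and these phrasings corrected, your induction stands as a complete proof, and it agrees with the argument behind the result the paper cites.
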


Lemma~\ref{divideandrule} will be crucial in
proving Proposition~\ref{easyprojnormal}, which is
the following general result on projective normality
{of pluricanonical images of varieties of general type.
Proposition~\ref{easyprojnormal} shows that
our main result} on projective normality {of pluricanonical images of Horikawa varieties},
Theorem~\ref{projnormal}, has the correct bounds as far as geometry of Horikawa varieties
is concerned. {Proposition~\ref{easyprojnormal} also shows}
that general standard methods {using}
Castelnuovo-Mumford regularity
{(see Remark~\ref{easyremark}) yield limited results:}
\medskip

\begin{prop}\label{easyprojnormal}
Let $X$ be a variety of general type of dimension $n$
{with  Gorenstein}
{canonical singularities and}
base-point-free canonical bundle.
Then
the image of $X$ by the morphism induced
by $|sK_{X}|$ is a projectively normal variety for all $s\geq n+2$.
\end{prop}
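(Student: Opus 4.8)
The plan is to deduce projective normality from the surjectivity of multiplication maps, and then to establish that surjectivity by reducing, via Lemma~\ref{divideandrule}, to a Castelnuovo--Mumford regularity statement that follows from Kawamata--Viehweg vanishing. By the standard criterion, the image of $X$ under $|sK_X|$ is projectively normal if and only if the multiplication maps
\[
H^{0}(ksK_X)\otimes H^{0}(sK_X)\longrightarrow H^{0}((k+1)sK_X)
\]
are surjective for all $k\ge 1$ (equivalently, $\operatorname{Sym}^{k}H^{0}(sK_X)\to H^{0}(ksK_X)$ is surjective for all $k$, using that the $s$-th Veronese of the canonical ring is the relevant integrally closed subring). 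Fixing $k\ge 1$, I would apply Lemma~\ref{divideandrule} with $E=\mathcal{O}_X(ksK_X)$ and $\mathcal{L}_1=\cdots=\mathcal{L}_s=\mathcal{O}_X(K_X)$, which reduces the surjectivity of the displayed map to that of the $s$ maps
\[
\alpha_j\colon H^{0}\big((ks+j-1)K_X\big)\otimes H^{0}(K_X)\longrightarrow H^{0}\big((ks+j)K_X\big),\qquad 1\le j\le s.
\]
As $k\ge 1$ and $1\le j\le s$ vary, the integer $m:=ks+j-1$ runs through every integer $\ge s$, so it suffices to show that $H^{0}(mK_X)\otimes H^{0}(K_X)\to H^{0}((m+1)K_X)$ is surjective for all $m\ge s$; since $s\ge n+2$, I would prove this for all $m\ge n+2$.

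The reason for passing through Lemma~\ref{divideandrule} is that one cannot apply regularity to the polarization $sK_X$ directly: by Serre duality $H^{n}(\mathcal{O}_X(-nsK_X))\cong H^{0}((1+ns)K_X)^{\vee}\ne 0$, so $\mathcal{O}_X$ is not $0$-regular with respect to $sK_X$. Splitting the jump by $sK_X$ into $s$ jumps by $K_X$ moves the problem into a range where the top cohomology can be controlled. Concretely, I would use the Castelnuovo--Mumford regularity criterion relative to the globally generated line bundle $K_X$: if a coherent sheaf $\mathcal{F}$ satisfies $H^{i}(\mathcal{F}\otimes\mathcal{O}_X(-iK_X))=0$ for all $i>0$, then $H^{0}(\mathcal{F}\otimes\mathcal{O}_X(mK_X))\otimes H^{0}(K_X)\to H^{0}(\mathcal{F}\otimes\mathcal{O}_X((m+1)K_X))$ is surjective for all $m\ge 0$. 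Applying this to $\mathcal{F}=\mathcal{O}_X((n+2)K_X)$, the required vanishings are
\[
H^{i}\big(\mathcal{O}_X((n+2-i)K_X)\big)=H^{i}\big(K_X+(n+1-i)K_X\big)=0,\qquad 1\le i\le n,
\]
which hold by Kawamata--Viehweg vanishing since $(n+1-i)K_X$ is nef and big for $1\le i\le n$ (recall that $K_X$ is nef and big, being base-point-free on a variety of general type). The conclusion reads exactly that $H^{0}(m'K_X)\otimes H^{0}(K_X)\to H^{0}((m'+1)K_X)$ is surjective for all $m'\ge n+2$, which completes the reduction.

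The main obstacle, and the precise reason the threshold is $n+2$ rather than the optimal $n+1$, lies in the top-degree Kawamata--Viehweg vanishing: to force $H^{i}$ of a twist of $\mathcal{O}_X$ to vanish for $i=n$ one must add to $K_X$ something that is still nef and big, which pushes the regularity bound up to $n+2$. For $\mathcal{O}_X((n+1)K_X)$ the hypothesis of $0$-regularity fails at $i=n$, where it would demand $H^{n}(K_X)=0$, i.e.\ $H^{0}(\mathcal{O}_X)=0$ by Serre duality, which is absurd; this single-unit loss is exactly what the structural analysis behind Theorem~\ref{projnormal} is designed to recover. I would finally note that this argument uses no special geometry of Horikawa varieties: the only ingredients are the purely formal Lemma~\ref{divideandrule}, the global generation of $K_X$, and Kawamata--Viehweg vanishing.
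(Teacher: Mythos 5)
Your proposal is correct and follows essentially the same route as the paper: reduce projective normality to degree-one generation of the section ring of $sK_X$, split each multiplication step via Lemma~\ref{divideandrule} into $s$ steps by $K_X$, and then obtain surjectivity of $H^{0}(mK_X)\otimes H^{0}(K_X)\to H^{0}((m+1)K_X)$ for $m\ge n+2$ from Mumford's Castelnuovo--Mumford lemma applied with respect to the base-point-free bundle $K_X$, using Kawamata--Viehweg vanishing. Your additional remarks on why the threshold cannot be lowered to $n+1$ by this method reproduce the content of the paper's Remark~\ref{easyremark}, so nothing is missing.
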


\begin{proof}
{Let $L \colon =\mathcal O_X(sK_{X})$.}
It is enough to show that
\begin{equation*}
 {H^0({L})^{\otimes r+1}}
 \overset{\gamma_r}\longrightarrow
H^0({L^{\otimes r+1}})
\end{equation*}
is surjective {for all $r\geq 1$.
By} Lemma~\ref{divideandrule}, it is enough to show
that the maps
$${\alpha_j: H^0(X, jK_X )\otimes H^0(X, K_X) \longrightarrow H^0(X, (j+1)K_X)}$$
are surjective for all {$j \geq s$}.
Note that since $s \ge n+2$ then {$s-i \ge 2$ for all $1 \le i \le n$}.
It follows from  the
Kawamata-Viehweg vanishing
{that
$H^i(X, (s-i)K_X)=0$ for all $1 \le i \le n$, so $\mathcal{O}_X(K_X)$
is $s$-regular}.
By  Castelnuovo-Mumford Lemma (see \cite[p. 41, Theorem 2]{Mumford}),
the surjection of {$\alpha_j$ for all $j \ge s$} now follows.
Hence by Lemma~\ref{divideandrule}, the multiplication map
{$\gamma_r$ surjects for all $r\geq 1$.}
\end{proof}

\begin{rem}\label{easyremark} What is at the heart of the proof of the above
Proposition~\ref{easyprojnormal} is the fact that the multiplication map
\begin{equation*}
 \alpha_s: H^0(X,sK_{X}) \otimes H^0(X,K_{X}) \longrightarrow H^0(X,(s+1)K_{X})
\end{equation*}
 is surjective,
for all $s\geq n+2$, for a variety of general type of dimension $n$. This follows by  the
Castelnuovo-Mumford Lemma and the Kawamata-Viehweg vanishing as noted in the proof of the
Proposition~\ref{easyprojnormal}.
{Note that, if $\alpha_{n+1}$ were surjective,
Castelnuovo-Mumford  Lemma would be of no help whatsoever to prove it.}
\end{rem}

In view of Remark~\ref{easyremark} the following result on projective normality
needs non standard methods of proof.
{The proof of Theorem~\ref{projnormal}
crucially
uses the  structure of Horikawa varieties, namely,
that its canonical map is a morphism which is a
generically double cover a variety of minimal degree and
Theorem~\ref{regular}.}

\begin{thm}\label{projnormal}
Let ${X}$ be a Horikawa variety of dimension $n$.
Then $K_{\overline X}^{\otimes s}$ embeds the
canonical model $\overline X$ of ${X}$
as a projectively normal variety
if and only if {$s\geq
n+2$ or $s=n+1$ and $p_g({X}) \neq n+1$}.
\end{thm}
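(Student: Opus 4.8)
The plan is to prove both directions of the equivalence in Theorem~\ref{projnormal}. The ``only if'' direction is already supplied by the preceding results: Proposition~\ref{neverbirational} shows that $|sK_X|$ fails to be birational (hence cannot be an embedding giving a projectively normal image) when $1 \le s \le n$, and also when $s = n+1$ and $p_g(X) = n+1$. So the content of the theorem is the ``if'' direction: I must show $K_{\overline X}^{\otimes s}$ embeds $\overline X$ as a projectively normal variety whenever $s \ge n+2$, and also in the borderline case $s = n+1$ with $p_g(X) \ne n+1$. Throughout I work with the canonical model $\overline X$, which has Gorenstein canonical singularities and base-point-free $K_{\overline X}$, and I will freely use that $\overline X$ is regular (Theorem~\ref{regular}) and in fact pluriregular in the smooth-image case (Theorem~\ref{regularity}).

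For the range $s \ge n+2$, the embedding statement follows once very ampleness of $sK_{\overline X}$ is established; projective normality of the image is then exactly Proposition~\ref{easyprojnormal}, which already gives that $|sK_{\overline X}|$ maps $\overline X$ to a projectively normal variety for all $s \ge n+2$. So here the only remaining point is very ampleness, which I would deduce by the same kind of Castelnuovo--Mumford/Kawamata--Viehweg argument, checking separation of points and tangent vectors on the canonical double cover. The genuinely hard case is $s = n+1$ with $p_g(X) \ne n+1$, precisely because, as Remark~\ref{easyremark} stresses, the key multiplication map $\alpha_{n+1}$ lies outside the reach of Castelnuovo--Mumford methods: one cannot get surjectivity of $H^0((n+1)K_{\overline X}) \otimes H^0(K_{\overline X}) \to H^0((n+2)K_{\overline X})$ from $s$-regularity alone.

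The strategy for the borderline case is to reduce to the structure of the canonical double cover $\phi \colon \overline X \to Y$ onto a variety $Y$ of minimal degree, and to use the splitting $\phi_* \mathcal O_{\overline X} = \mathcal O_Y \oplus \mathcal L^{-1}$ with $\mathcal O_{\overline X}(K_{\overline X}) = \phi^* \mathcal O_Y(1)$, as in \eqref{equation.regularity}. Pushing the relevant multiplication maps down to $Y$ via $\phi$, the surjectivity of $\alpha_j$ for the pluricanonical sheaves splits into the $\mathcal O_Y$-summand and the $\mathcal L^{-1}$-summand. On $Y$, a variety of minimal degree, the maps $H^0(\mathcal O_Y(a)) \otimes H^0(\mathcal O_Y(1)) \to H^0(\mathcal O_Y(a+1))$ are surjective for all $a \ge 1$ because $Y$ is projectively normal; the subtle part is the twisted summand involving $\mathcal L$, where I expect to need the inductive hyperplane-section setup of Notation~\ref{notation.section.proj.normality}, restricting to the complete intersection curve $C = X_1$, which by Proposition~\ref{neverbirational} is hyperelliptic with $L_1$ a multiple of its $g^1_2$. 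I would then invoke Lemma~\ref{divideandrule} to assemble surjectivity of $\gamma_r$ from the surjectivity of the individual $\alpha_j$, lifting back up the tower of intersections $X_1 \subset \cdots \subset X_n$ using the exact sequences \eqref{eq.restriction.sequence2} and the pluriregularity vanishings.

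**The main obstacle** I anticipate is establishing surjectivity of the single map $\alpha_{n+1}$ in the twisted $\mathcal L^{-1}$-summand when $p_g(X) \ne n+1$; this is exactly where the hypothesis $p_g(X) \ne n+1$ must enter, since the excluded value $p_g(X) = n+1$ forces $K_X^n = 2$ and makes $L_1$ equal to the $g^1_2$, which is precisely the degenerate configuration ruling out birationality in Proposition~\ref{neverbirational}. I expect the argument to require a careful base-case analysis on the hyperelliptic curve $C$ — namely checking that the relevant multiplication map on $C$ is surjective when $\deg L_1 = K_X^n \ge 4$ (equivalently $p_g(X) \ge n+2$) — followed by the inductive climb through the $X_{n'}$, where at each stage the cohomology vanishing needed to lift surjectivity comes from pluriregularity and Kawamata--Viehweg applied to \eqref{eq.restriction.sequence2}.
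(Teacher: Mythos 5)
Your proposal is correct and follows essentially the same route as the paper: the ``only if'' part from Proposition~\ref{neverbirational}, the range $s \geq n+2$ from Castelnuovo--Mumford/Kawamata--Viehweg (Proposition~\ref{easyprojnormal}), and the critical case $s=n+1$ by descending the tower $X_1 \subset \cdots \subset X_n$ via Lemma~\ref{divideandrule}, the sequences \eqref{eq.restriction.sequence2} and vanishing, down to the hyperelliptic curve $C$, where the double-cover structure splits the multiplication map into two summands over $\mathbb{P}^1$. You also pinpoint exactly where the hypothesis $p_g(X) \neq n+1$ enters --- the twisted summand surjects precisely when $\deg L_1 = 2(p_g - n) \geq 4$, i.e.\ $r-2 \geq 0$ --- which is the paper's Step 5.
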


\begin{proof}
{The ``only if'' part of the statement follows
from Proposition~\ref{neverbirational}.}

\smallskip
We will prove the much more subtle {``if'' part
of the statement}
in several steps. We define $X_{n'}$ and $L_{n'}$ starting
from $\overline X$, as explained in
Notation~\ref{notation.section.proj.normality}.
We set $L=K_{\overline X}^{\otimes s}$.

\smallskip
\textit{Step 1:}
Since $\overline X$ is regular by Theorem~\ref{regular},
the varieties $X_{n^{\prime}}$
  are all regular
  {(in fact, this was proved at the
  end of the proof of Theorem~\ref{regular})}.
{It follows by using adjunction inductively that, in addition,
the varieties $X_i$ are of general type}.

\bigskip

\textit{Step 2:} To prove the theorem, it would be enough to show that
the multiplication map{s}
\[
H^{0}(L^{\otimes t})\otimes H^{0}({L})
\longrightarrow
 H^{0}({L^{\otimes (t+1)}}),
\]
{which, according to
Notation~\ref{notation.section.proj.normality} are
\[
H^{0}(L_{n}^{{\otimes ts}})\otimes
H^{0}(L_{n}^{\otimes s})\xrightarrow{\ {\alpha(ts,s;n)}\ }
H^{0}(L_{n}^{{\otimes (t+1)s}}),
\]
surject for all $t \geq 1$}.
{For this, by Lemma~\ref{divideandrule} it is enough to prove
that the multiplication maps
\[
H^{0}(L_{n}^{\otimes t'})\otimes H^{0}(L_{n})\xrightarrow{\ \alpha(t',1;n)\ }
H^{0}(L_{n}^{\otimes (t'+1)})
\]
surject for all $t' \geq n+1$.}
{Recall} that
$L_{n}=K_{\overline X}$. {As observed in Remark~\ref{easyremark}, if $t' \geq n+2$,
then
$\alpha(t',1;n)$ surjects by \cite[p. 41, Theorem 2]{Mumford} and
the Kawamata-Viehweg vanishing theorem. However, Remark~\ref{easyremark}
also says that these standard methods do not yield the surjectivity
of $\alpha(n+1,1;n)$. Therefore the proof of the surjectivity of  the map
$\alpha(n+1,1;n)$}
will be handled in a completely a different way. {We point out (it will be crucial in Step 5) that,
because of our hypothesis, we may assume $p_g(\overline X) > n+1$ when studying $\alpha(n+1,1;n)$.}

\bigskip

\textit{Step 3:} {In order to prove the surjectivity of
$\alpha(n+1,1;n)$
w}e consider, for any $2\leq
n^{\prime}\leq n$, the  commutative diagram

\begin{equation}\label{comm.diagram.restriction}
\begin{matrix}
{\scriptstyle 0} & {\scriptstyle\rightarrow} & {\scriptstyle
 H^{0}(L_{n^{\prime}}^{\otimes n+1}) \otimes H^{0}(\mathcal{O}_{X_{n^{\prime}}})}&
{\scriptstyle \rightarrow} &
{\scriptstyle  H^{0}(L_{n^{\prime
}}^{\otimes n+1}) \otimes H^{0}(L_{n^{\prime}})} &
{\scriptstyle\rightarrow} &
{\scriptstyle  H^{0}%
(L_{n^{\prime}}^{\otimes n+1}) \otimes H^{0}(L_{n^{\prime}-1})}& {\scriptstyle \rightarrow} &
{\scriptstyle 0}\\
&&\downarrow && \downarrow && \downarrow\\
0 &\rightarrow & H^{0}(L_{n^{\prime}}^{\otimes n+1}) & \rightarrow &
H^{0}%
(L_{n^{\prime}}^{\otimes n+2})& \rightarrow &
H^{0}(L_{n^{\prime}-1}^{\otimes
n+2}) &\rightarrow & 0
\end{matrix}
\end{equation}
{Both horizontal sequences in commutative
diagram~\eqref{comm.diagram.restriction} are exact.
Indeed, the
top horizontal row is exact on the right  because $X_{n^{\prime}}$
is regular, as
 shown in Step
1, and the  bottom horizontal row is exact on the right
because $H^{1}(L_{n^{\prime}%
}^{\otimes n+1})=0$ by the Kawamata-Viehweg vanishing theorem.}
{
Note that $\alpha(n+1,1;n')$ is the middle vertical map of
\eqref{comm.diagram.restriction}, so $\alpha(n+1,1;n)$ is
the middle vertical map of
\eqref{comm.diagram.restriction}
when $n'=n$. Then we prove
the surjectivity of $\alpha(n+1,1;n)$ by induction on $n'$.
Thus assume $\alpha(n+1,1;n'-1)$ surjects for $2 \leq n' \leq n$.
The left-hand-side vertical map of \eqref{comm.diagram.restriction}
is an isomorphism. The
right-hand-side vertical map of  \eqref{comm.diagram.restriction}
 is the composition of the map
\begin{equation}\label{map.restriction}
 H^{0}(L_{n^{\prime}}^{\otimes n+1}) \otimes
H^{0}(L_{n^{\prime}-1})\longrightarrow
 H^{0}(L_{n^{\prime}-1}^{\otimes n+1})\otimes
H^{0}(L_{n^{\prime}-1})
\end{equation}
and $\alpha(n+1,1;n'-1)$. The map in \eqref{map.restriction} surjects
because
}
{
$H^{1}(L_{n^{\prime}}^{\otimes
n})=0$, by adjunction and the Kawamata-Viehweg vanishing theorem,
since $n^{\prime}\geq 2$.}
{
The map $\alpha(n+1,1;n'-1)$ surjects by
induction hypothesis, so the right-hand-side vertical map
of  \eqref{comm.diagram.restriction} surjects.
Thus the middle vertical map $\alpha(n+1,1;n')$ surjects.
Therefore the only thing left to check is the first step
of the induction, namely, the surjectivity of
the multiplication map
\[
  H^{0}(L_{1}^{\otimes n+1}) \otimes H^{0}(L_{1})
\xrightarrow{\ \alpha(n+1,1;1)\ } H^{0}(L_{1}^{\otimes n+2})
\]
of global sections on $C$.
For brevity we denote $\beta=\alpha(n+1,1;1)$} and $\theta=L_{1}$.
By adjunction we have $\theta^{\otimes n}%
=K_{C}$.
No conventional results on curves will suffice to
show the surjectivity of $\beta$ due to lack of sufficient positivity
of the
line bundles
$\theta^{\otimes n+1}$ and $\theta$.
So we
have to handle this in a different way. This leads us to the
two, final steps of this proof.

\bigskip

\textit{Step 4:} In the previous steps  we have just
used that $\overline X$ is a
regular variety of
general type. The final steps require
the full force of the fact that $X$ is a Horikawa variety.
{By \cite[Propositions 2.2, 2.5]{Kobayashi}, we know that the
canonical map of $\overline X$ is a morphism, which is
{finite} of  degree $2$ onto a   variety  of minimal
degree.}
Thus the linear system $|\theta|$ induces the
{morphism} $\pi: C\rightarrow D$,
where $\pi$ is
{finite}
of degree $2$. Since $Y$
is a variety of minimal degree,
$D$ is a rational normal curve of degree $r$.
{In addition,
since $D$ is cut-out
by general hyperplane sections, $D$ is smooth.}
Then we have
\begin{equation*}
 \pi_{\ast}\mathcal{O}_{C}=\mathcal{O}_{\mathbb{P}^{1}}
\oplus\mathcal O_{\mathbb{P}^{1}}(-a).
\end{equation*}
 By relative
duality we have $a={nr+2}$, so
\[
\pi_{\ast}\mathcal{O}_{C}=\mathcal{O}_{\mathbb{P}^{1}}\oplus\mathcal{O}%
_{\mathbb{P}^{1}}\mathcal{(-}nr-2).
\]

\bigskip
\textit{Step 5:} Recall that to finish the proof we have
to show that the multiplication map of global sections on $C$
\begin{equation*}
H^{0}(\theta^{\otimes n+1}) \otimes H^{0}(\theta)
\xrightarrow{\ \beta \ } H^{0}(C,\theta^{\otimes n+2})
\end{equation*}
 is
surjective. Due to Step 4 and since $\theta=
\pi^{\ast}(\mathcal{O}_{\mathbb{P}^{1}}(r))$, we have
\begin{equation*}
 H^{0}(\theta)=H^{0}(\pi_{\ast}\mathcal{\theta})
=H^{0}(\mathcal{O}_{\mathbb{P}^{1}}(r))\oplus
H^{0}(\mathcal{O}_{\mathbb{P}^{1}}\mathcal{((}1\mathcal{-}n)r-2)),
\end{equation*}
 hence
$H^{0}(\theta)=H^{0}(\mathcal{O}_{\mathbb{P}^{1}}(r))$. We also
have
\begin{equation*}
 H^{0}
(\theta^{\otimes n+1})=H^{0}(\pi_{\ast}(\mathcal{\theta}^{\otimes n+1})
)=H^{0}(\mathcal{O}_{\mathbb{P}^{1}}((n+1)r))\oplus H^{0}(\mathcal{O}
_{\mathbb{P}^{1}}\mathcal{(}r-2)).
\end{equation*}
Note that
\begin{equation*}
 \pi_{\ast}(\mathcal{O}_{C})=\mathcal{O}_{\mathbb{P}^{1}}\oplus
\mathcal{O}_{\mathbb{P}^{1}}\mathcal{(-}nr-2)
\end{equation*} is a sheaf of $\mathcal{O}%
_{\mathbb{P}^{1}}$-algebras whose ring multiplication decomposes in the
following way: The map
\[
\mathcal{O}_{\mathbb{P}^{1}}\otimes\mathcal{O}_{\mathbb{P}^{1}}\rightarrow
\mathcal{O}_{\mathbb{P}^{1}}%
\]
is the ring multiplication in $\mathcal{O}_{\mathbb{P}^{1}}$. The maps%
\begin{align*}
\mathcal{O}_{\mathbb{P}^{1}}\otimes\mathcal{O}_{\mathbb{P}^{1}}\mathcal{(-}%
nr-2)  & \rightarrow\mathcal{O}_{\mathbb{P}^{1}}\mathcal{(-}nr-2)\\
\mathcal{O}_{\mathbb{P}^{1}}\mathcal{(-}nr-2)\otimes\mathcal{O}_{\mathbb{P}%
^{1}}  & \rightarrow\mathcal{O}_{\mathbb{P}^{1}}\mathcal{(-}nr-2)
\end{align*}
are the left and right module multiplication of $\mathcal{O}_{\mathbb{P}^{1}%
}\mathcal{(-}nr-2).$ The map
\[
\mathcal{O}_{\mathbb{P}^{1}}(-nr-2)\otimes\mathcal{O}_{\mathbb{P}^{1}%
}(-nr-2)\rightarrow\mathcal{O}_{\mathbb{P}^{1}}%
\]
is given by structure of double cover of $\pi$.
 Denote
\begin{align*}
A(m)  & =H^{0}(\mathcal{O}_{\mathbb{P}^{1}}(mr)),\\
A'(m)  & =H^{0}(\mathcal{O}_{\mathbb{P}^{1}}\mathcal{((}m-n)r-2)).
\end{align*}
In this notation
\begin{align*}
H^{0}(\theta^{\otimes n+1}) = A(n+1)\oplus A'(n+1),  \\
H^{0}(\theta)= A(1)\oplus A'(1) = A(1),
\end{align*}
 since $A'(1)=0$, and
 \begin{equation*}
H^{0}(\theta
^{\otimes n+2})=A(n+2)\oplus A'(n+2).
 \end{equation*}
Therefore the map $\beta$ splits as direct
sum of the maps%
\begin{align*}
& A(n+1)\otimes A(1)\overset{\beta_{1}}{\longrightarrow}A(n+2)\\
& A'(n+1)\otimes A(1)\overset{\beta_{2}}{\longrightarrow}A'(n+2).
\end{align*}
The map $\beta$ surjects if
{and only if}
the maps $\beta_{1}$ and $\beta_{2}$ both surject.
{On $\mathbb P^1$ the multiplication map of global sections
\begin{equation*}
 H^0(\mathcal O_{\mathbb P^1}(c_1)) \otimes H^0(\mathcal O_{\mathbb P^1}(c_2))
 \longrightarrow H^0(\mathcal O_{\mathbb P^1}(c_1+c_2))
\end{equation*}
surjects if and only if $c_1$ and $c_2$ are both nonnegative.}
{Recall that $A(1)=H^0(\mathcal O_{\mathbb P^1}(r))$ and $A(n+1)=H^0(\mathcal O_{\mathbb P^1}((n+1)r)$
and $r, (n+1)r >0$. On the other hand, $A'(n+1)=H^0(\mathcal O_{\mathbb P^1}(r-2))$.
Under our hypothesis, $p_g > n+1$, so the degree of $D$ is $r=p_{g}(X)-n \geq 2$ and $r-2 \geq 0$.
Then $\beta$ surjects as wished.}
\end{proof}

 \section{Hyperelliptic subcanonical polarized  varieties}

Even though this article focuses on
 Horikawa varieties, the
 main results of Sections 2 and 3, when considered for strong
 Horikawa varities, hold more generally.
 Indeed, if $X$ is a strong Horikawa variety, the
 polarized variety $(X, \omega_X)$ is hyperelliptic
 according to the following
 definition (compare with \cite[Definition 1.1]{Fujita}):

\begin{definition}
{\rm Let $(X,A)$ be a polarized variety.
If $A$ is base-point-free,
the morphism induced by $|A|$ has degree $2$ and its
image is a variety of minimal degree, then
$(X,A)$ is a hyperelliptic polarized variety.}
 \end{definition}

 We also recall the definition of subcanonical variety
 (compare with \cite[Definition 2.11]{BGMR}):

 \begin{definition}
{\rm Let $(X,A)$ be a polarized variety,
let $A$ be base-point-free. If $\omega_X=A^{\otimes s}$, then
we say $(X,A)$ is $s$--subcanonical.}
\end{definition}

\begin{prop}\label{prop.class.hyperelliptic}
 {Let $(X,A)$ be an $s$--subcanonical
 polarized variety of dimension $n$.
 If $(X,A)$ is hyperelliptic, then
 $s \geq -n+1$.
 }
\end{prop}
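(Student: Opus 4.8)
The plan is to exploit the structure of a hyperelliptic polarized variety $(X,A)$: the linear system $|A|$ induces a degree $2$ morphism $\psi \colon X \to Y \subset \PPP^N$ onto a variety of minimal degree. First I would take the Stein factorization and pass to the finite double cover $\psi \colon X \to Y$ (the generically finite case reduces to the finite one by working with the image of the projective model, exactly as in Proposition~\ref{flat}). The subcanonical hypothesis $\omega_X = A^{\otimes s}$ translates, via the pullback description $A = \psi^*\OO_Y(1)$, into a concrete relation on the double cover that I can compare with the ramification formula. The goal is to extract a numerical inequality forcing $s \geq -n+1$.

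The key computation is the ramification formula for the finite double cover. Writing $\psi_* \OO_X = \OO_Y \oplus \mathcal L^{-1}$ with $\mathcal L^{\otimes 2} = \OO_Y(B)$ for the branch divisor $B$, one has
\begin{equation*}
 \OO_X(K_X) = \psi^*(\OO_Y(K_Y) \otimes \mathcal L).
\end{equation*}
The strategy is to reduce to dimension one by cutting with $n-1$ general members of $|A|$, obtaining a smooth hyperelliptic curve $C$ mapping $2:1$ onto a rational normal curve $D = \PPP^1$ of degree $r = \deg Y$, just as in Step 4 of the proof of Theorem~\ref{projnormal}. On $C$, adjunction gives $\omega_C = (\omega_X \otimes A^{\otimes(n-1)})|_C = A|_C^{\otimes(s+n-1)}$, since restricting the $n-1$ hyperplane sections adds $A^{\otimes(n-1)}$ to the canonical class. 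Because $A|_C$ is the degree $r$ pullback of the $g^1_2$ generating line bundle, its degree is positive, so $\omega_C = (A|_C)^{\otimes(s+n-1)}$ has nonnegative degree $2g_C - 2 \geq -2$. The crucial point is to rule out the most negative possibilities and pin down the sign of $s+n-1$.

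The main obstacle will be handling the boundary numerology carefully. The degree of $\omega_C$ is $2g_C-2 \ge -2$, and $\deg(A|_C) = r > 0$; if $s + n - 1$ were negative, then $\omega_C$ would be a negative multiple of an effective positive-degree bundle, forcing $\deg \omega_C < 0$, hence $g_C = 0$ and $\deg \omega_C = -2$. I then need to exclude this rational case, or rather show it still yields $s + n - 1 \geq 0$, i.e. $s \geq -n+1$. Here I would use the double cover structure directly: the relative dualizing sheaf computation $\pi_* \OO_C = \OO_{\PPP^1} \oplus \OO_{\PPP^1}(-a)$ with $a = \tfrac{1}{2}\deg B|_D$ combined with $\omega_C = \pi^*(\OO_{\PPP^1}(s+n-1)) \cdot r$ forces $a = -(s+n-1)r - 2$ (compare Step 4, where $a = nr+2$ in the canonical case $s=1$). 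Since $a \ge 0$ is required for the branch divisor to define an honest double cover (equivalently $\mathcal L$ is effective, $B \ge 0$), I obtain $-(s+n-1)r \ge 2 > 0$, which would contradict $s + n - 1 < 0$ only after checking the edge case $s+n-1 = 0$ is permitted; the inequality $a \ge 0$ combined with $r \ge 1$ then yields precisely $s + n - 1 \geq 0$, that is, $s \geq -n+1$. The delicate part is justifying that $\mathcal L$ (equivalently the branch divisor) must be effective and nonzero, which follows because $B$ is a genuine branch locus of a nontrivial degree $2$ cover, so $\mathcal L^{\otimes 2} = \OO_Y(B)$ has nonnegative degree on every curve, in particular on the ruling direction controlling $a$.
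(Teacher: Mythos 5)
Your reduction to the curve $C \to D \cong \mathbb{P}^1$ is a reasonable idea, but the proof as written fails at two points, one computational and one structural. First, the key formula has a sign error: you write $a = -(s+n-1)r-2$, yet the benchmark you cite from Step 4 of Theorem~\ref{projnormal} (namely $a = nr+2$ when $s=1$) contradicts this, since setting $s=1$ in your formula gives $-nr-2$. The correct relation, obtained from $\omega_C = \pi^*\mathcal{O}_{\mathbb{P}^1}(a-2)$ (ramification for the double cover $\pi$) together with $\omega_C = (A|_C)^{\otimes(s+n-1)} = \pi^*\mathcal{O}_{\mathbb{P}^1}((s+n-1)r)$, is $a = (s+n-1)r+2$. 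With this correction, effectivity of the branch divisor plus connectedness of $C$ gives $a \geq 1$, hence $(s+n-1)r \geq -1$. This does \emph{not} "yield precisely $s+n-1 \geq 0$": it rules out $s+n-1 \leq -2$ and rules out $s+n-1=-1$ when $r \geq 2$, but the case $r=1$, $s+n-1=-1$ (i.e. $s=-n$, $a=1$) survives every constraint you impose.

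That surviving case is a genuine geometric configuration, not an artifact: take $X \to Y=\mathbb{P}^n$ (so $\deg Y = r = 1$) a double cover branched along a quadric, i.e. $X$ a quadric hypersurface in $\mathbb{P}^{n+1}$ projected from a point off $X$. Here $\omega_X = A^{\otimes(-n)}$, the branch divisor is effective and nonzero, $C$ is connected, and $\omega_C = \mathcal{O}(-2) = (A|_C)^{\otimes(-1)}$; no curve-level numerology can exclude it. What excludes it is the requirement, built into the definition of hyperelliptic, that the degree $2$ morphism be induced by the \emph{complete} linear series $|A|$: one computes $h^0(A) = h^0(\mathcal{O}_{\mathbb{P}^n}(1)) + h^0(\mathcal{O}_{\mathbb{P}^n}(1)\otimes\mathcal{L}^{-1}) = n+2 > n+1$, so $|A|$ actually embeds $X$ as a quadric (a morphism of degree $1$), and $(X,A)$ is not hyperelliptic. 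Your proposal never invokes completeness of $|A|$, so this gap cannot be closed within your argument as stated. For comparison, the paper cuts down to surfaces rather than curves and handles exactly this boundary case by the completeness argument (in its $Y_2=\mathbb{P}^2$ case: "if $n+s=0$ then $\varphi$ is not induced by the complete linear series $|A|$"), using the Hirzebruch-surface invariants $m$ and $e$ to kill the remaining cases. Your curve-level route is salvageable and arguably cleaner — after correcting the sign, connectedness disposes of all $r \geq 2$, leaving only the $r=1$ case to be eliminated by the $h^0(A)$ computation — but as written the proof both miscomputes the central formula and leaves open precisely the extremal case the proposition is about.
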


\begin{proof}
We call $\varphi$ to the morphism induced by $|A|$ and
argue as in the proof of Theorem~\ref{regular} and
construct $\overline X_2$, $\phi_2$ and $Y_2$ in the same fashion.
If $Y_2$ is $\PPP^2$, then the branch divisor of $\phi_2$ is
$2(n+s+1)$ times a line in $\PPP^2$. Since the branch divisor
is effective and $\overline X_2$ is connected,
then $n+s \geq 0$. If $n+s=0$, then $\varphi$ is not induced
by the complete linear series $|A|$ of $A$, hence $n+s \geq 1$ in
this case.

\smallskip
If $Y_2$ is the Veronese surface, then
the branch divisor of $\phi_2$ is
$2(2n+2s-1)$ times a line in $\PPP^2$.
Since the branch divisor
is effective and $\overline X_2$ is connected,
$n+s \geq 1$ in this case.

\smallskip
If $Y_2$ is smooth rational normal scroll or a cone over a
rational normal curve (in the latter case, we carry out
a construction analogous to \eqref{CD}),
then we get results analogous to \eqref{ramification},
\eqref{eq.for.boundedness1}, \eqref{formula.branch.singular.case}
and \eqref{eq.for.boundedness2}. Thus
the branch divisor of $\phi_2$ is linearly equivalent
to
\begin{equation}\label{eq.ramification.polarized}
 2(n+s)C_0+2((n+s-2)m+e+2)f,
\end{equation}
with $m \geq e+1$ and
the branch divisor of $p_2$ is linearly equivalent
to
\begin{equation*}
  2(n+s)C_0+2(n+s-e+2)f.
\end{equation*}
Since these branch divisors are effective, we get
$n +s \geq 0$. If $n+s=0$ and $Y_2$ is smooth,
since $\overline X_2$ is connected, $2m \leq e+1$,
but this contradicts $m \geq e+1$.
If $n+s=0$ and $Y_2$ is singular,
since $\overline X_2$ is connected, $e \leq 1$, which again
is a contradiction.
Thus $n+s \geq 1$ in
both cases.
\end{proof}

Now we are ready to state the analogues of the main results of Sections 2 and 3.
The analogue of Theorem~\ref{topology}
generalizes what \cite[Corollary 5.17]{Fujita}  says with respect to the simple connectedness of hyperelliptic, subcanonical polarized varieties (see the comment about this
just before Theorem~\ref{topology}). The analogues of Theorems~\ref{theorem.p_g.bounded} and
 \ref{regularity} (1) are novel though.

\begin{thm}\label{thm.analogue}
 If in the statements  of
 Theorems~\ref{regular}, \ref{theorem.p_g.bounded},
 \ref{regularity} (1) and \ref{topology} we replace ``$X$ a
 Horikawa variety'' by
 ``\,$(X,A)$ a hyperelliptic, subcanonical polarized variety
 with
 canonical  singularities'',
 ``\,the canonical morphism of $X$'' by
 ``\,the morphism induced by $|A|$'',
 and ``\,$p_g(X)$''
by ``\,$h^0(A)$'', then the same conclusions of those
theorems hold in this new setting.

\smallskip
In particular, the above statements hold for
$(X,A)$ polarized, hyperelliptic Calabi--Yau varieties
 with
 canonical  singularities and for $(X,A)$
 polarized, hyperelliptic
 Fano varieties of index $i \leq n-3$ (in the sense of
 Fujita, see \cite[Definition 1.5]{Fujita}),
 with canonical singularities.
\end{thm}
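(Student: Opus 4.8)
The plan is to observe that the proofs of Theorems~\ref{regular}, \ref{theorem.p_g.bounded}, \ref{regularity}~(1) and \ref{topology} use only two features of a Horikawa variety: that the canonical map is a finite degree~$2$ morphism onto a variety of minimal degree, and a single ramification identity on the auxiliary surface obtained by cutting with $n-2$ general members of the relevant linear system. The first feature is exactly the definition of a hyperelliptic polarized variety, so the only thing that changes when we pass from $\omega_X$ to an arbitrary subcanonical polarization $A$ with $\omega_X=A^{\otimes s}$ is the ramification identity, and it changes in a completely controlled way. Writing $\varphi\colon X\to Y$ for the morphism induced by $|A|$, $\overline X$ for the model on which $\varphi$ is finite, and $A=\varphi^*\mathcal O_Y(1)$, adjunction on the surface $\overline X_2$ cut out by $n-2$ general members of $|A|$ gives $K_{\overline X_2}=\phi_2^*\mathcal O_{Y_2}(n+s-2)$, which for $s=1$ is precisely the identity $K_{\overline X_2}=\phi_2^*\mathcal O_{Y_2}(n-1)$ used in the Horikawa case. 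Consequently the branch divisor becomes $2(n+s)C_0+2((n+s-2)m+e+2)f$ in the scroll case and $2(n+s)C_0+2((n+s-1)e+2)f$ in the cone case, which are exactly the formulas already recorded in the proof of Proposition~\ref{prop.class.hyperelliptic} (see \eqref{eq.ramification.polarized}). The whole surface analysis is therefore available after the single substitution of $n-1$ by $n+s-2$.

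First I would rerun the proof of Theorem~\ref{regular} verbatim. The key is that the numerical bound \eqref{bound1}, namely $(a+1)e\le 4$, comes from intersecting the ramification divisor $R=R_1+F$ with $F$, and in that intersection every term proportional to the coefficient $n+s$ cancels; one therefore recovers $a=0$ and $e\in\{2,3,4\}$ (resp. $a=0,\ e=2$), that is \eqref{eq.for.boundedness1} and \eqref{eq.for.boundedness2}, unchanged. The regularity of $\overline X_2$ then follows from $h^1(\mathcal L^{-1})=h^1\big((n+s-2)(C_0+ef)\big)=0$, a vanishing provided by the projection $W_2\to\mathbb P^1$ together with $n+s\ge 1$, the latter holding automatically by Proposition~\ref{prop.class.hyperelliptic}. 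Granting this, Theorem~\ref{theorem.p_g.bounded} is immediate: the same bounds $e\le 4$ and $e=2$ confine a singular $Y_2$ to $\mathbb P^3,\mathbb P^4$ or $\mathbb P^5$, hence $Y$ to $\mathbb P^{n+1},\mathbb P^{n+2}$ or $\mathbb P^{n+3}$, so $h^0(A)\le n+4$.

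For Theorem~\ref{regularity}~(1) I would repeat the flat-cover computation, now with $\omega_Y\otimes\mathcal L=\mathcal O_Y(s)$ in place of $\mathcal O_Y(1)$ in \eqref{equation.regularity}; pluriregularity of $\overline X$ reduces to the vanishing of $h^j(\mathcal O_Y(s))$ for $0<j<n$, which holds for every admissible $s$ because $s\ge 1-n>-n$ forces the fibrewise sheaf $\mathcal O_{\mathbb P^{n-1}}(s)$ to have no cohomology in that range. The substantive work is Theorem~\ref{topology}. I would reproduce its Cases~1--3, whose only inputs are that the big and base--point--free part of the branch divisor has smooth irreducible general member (so that Lemmas~\ref{cor.Godbillon} and \ref{Nori2} apply) and that, in the remaining subcases, the branch divisor splits as $C_0+B'$ with $B'$ big and base--point--free meeting $C_0$ transversally. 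What must be checked is that these positivity and transversality conditions persist under the substitution of $n-1$ by $n+s-2$: the coefficient of $C_0$ in $B'$ equals $2(n+s)-1\ge 1$ exactly because $n+s\ge 1$, while the controlling intersection number $B'\cdot C_0$ stays nonnegative — in the cone case it equals $4-e$, independent of $n+s$ and nonnegative for $e\le 4$, and in the scroll case it is forced nonnegative by the normality of $\overline X_2$. The borderline values of $s$ that would threaten positivity are excluded by the effectivity of the branch divisor, i.e. by Proposition~\ref{prop.class.hyperelliptic}.

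The ``in particular'' clause then follows by locating the two classical families inside the subcanonical range. A Calabi--Yau polarized variety is the case $s=0$, and a Fano polarized variety of Fujita index $i$ is subcanonical with $s$ determined by $i$; the hypothesis $i\le n-3$ is a condition guaranteeing $s\ge 1-n$, so that Proposition~\ref{prop.class.hyperelliptic} and the four adapted arguments above apply verbatim, yielding in particular simple connectedness and, when the image is singular, $h^0(A)\le n+4$. I expect the main obstacle to be precisely the uniformity demanded by the topology proof: one must ensure that the reduction to the simply connected complement $Y_2\smallsetminus C_0$, and the transversality of $B'$ with $C_0$ needed to desingularise the double cover, remain valid across the whole range $s\ge 1-n$ rather than only at the canonical value $s=1$. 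The saving fact is that every intersection number with $C_0$ or $F$ governing these steps is either independent of $n+s$ or controlled by the effectivity and normality statements underlying Proposition~\ref{prop.class.hyperelliptic}, so that a single inequality settles all cases at once.
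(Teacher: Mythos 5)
Your overall strategy is exactly the paper's: cut down to the surface $\overline X_2$, observe that it is $s'$--subcanonical with $s'=n+s-2$, note that the terms proportional to $n+s$ cancel in the intersection computations so that \eqref{bound1}, \eqref{eq.for.boundedness1} and \eqref{eq.for.boundedness2} survive unchanged, and rerun the four proofs. Your handling of Theorems~\ref{regular} and \ref{theorem.p_g.bounded} matches the paper, and your argument for Theorem~\ref{regularity}~(1) is essentially the paper's (one small imprecision: for $s\geq 0$ the fibrewise vanishing only reduces $h^1(\mathcal O_Y(s))$ to $h^1(\mathbb P^1, S^s\mathcal E)$, whose vanishing still needs the positivity $a_i>0$ of the scroll, not just $s>-n$; this is a one-line fix).

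The genuine gap is in your treatment of Theorem~\ref{topology}. You assert that ``the borderline values of $s$ that would threaten positivity are excluded by the effectivity of the branch divisor, i.e.\ by Proposition~\ref{prop.class.hyperelliptic}'' and that ``a single inequality settles all cases at once.'' This is false: Proposition~\ref{prop.class.hyperelliptic} only excludes $n+s\leq 0$, and the admissible value $n+s=1$ (equivalently $s'=-1$, which occurs for Fano polarizations of index $n-1$) breaks the positivity your argument relies on. Concretely, when $n+s=1$ and $Y_2=\mathbb F_0$ embedded by $|C_0+2f|$, formula \eqref{eq.ramification.polarized} gives $B\sim 2C_0$, which is base--point--free but \emph{not big}: its general reduced member is a union of two disjoint sections, hence not irreducible, so Lemma~\ref{Nori2} does not apply, and there is no decomposition $B=C_0+B'$ with $B'$ big either. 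Your two checkpoints (the coefficient of $C_0$ in $B'$ being at least $1$, and $B'\cdot C_0\geq 0$) are not the obstruction here; it is bigness that fails. The paper isolates precisely this exceptional configuration and disposes of it by hand: effectivity forces $m\in\{1,2\}$; for $m=1$ the branch divisor $2C_0+2f$ is ample, while for $m=2$ normality forces $B$ to be two disjoint lines, so the double cover is $\overline X_2\cong\mathbb F_0$, which is simply connected. Without this separate argument your proof of the simple connectedness statement is incomplete. (The gap does not affect the ``in particular'' clause, since Calabi--Yau gives $s'=n-2\geq 0$ and Fano of index $i\leq n-3$ gives $s'=n-i-2\geq 1$, both away from the exceptional value $s'=-1$.)
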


\begin{proof}
 In the proofs of Theorems~\ref{regular}, \ref{theorem.p_g.bounded},
and \ref{topology} we showed that  $\overline X_2$ is regular,
that $p_g(\overline X_2) \leq 6$
 if $Y_2$ and that $\overline X_2$ is simply connected.
 If we set $s'=n-1$,
 in all those cases, we in fact proved those statements for
 any hyperelliptic, $s'$--subcanonical polarized
 normal surface
 $(\overline X_2, \phi_2^*(\mathcal O_Y(1)))$
 with canonical singularities and $s' \geq 1$.
 Actually, it is easy to check that the same arguments go through and the
 same statements are true
 for $s'=0$ and for $s'=-1$, with the possible exception of
 Theorem~\ref{topology} in the latter case.
 Thus, if $(X,A)$ is a
 hyperelliptic, $s$--subcanonical polarized
 with canonical singularities
 and $s \geq -n+1$, then we define
 $\overline X_2$ from $(X,A)$ in a  way
 anologuos to the proofs of
Theorems~\ref{regular}, \ref{theorem.p_g.bounded},
and \ref{topology}, so we get analogous conclusions except maybe
for the above mentioned exception.

\smallskip

 We deal now with the exception. In that case, the arguments
 of the proof of Theorem~\ref{topology} do work except
 maybe if $Y_2=\mathbb F_0$. Then (see \eqref{eq.ramification.polarized})
 \begin{equation*}
  B \sim 2C_0 +(4-2m)f.
 \end{equation*}
 Since $B$ is effective, then $m=1$, in which case $B$ is ample, or
 $m=2$. Since $\overline X_2$ is normal, in the latter case $B$ is the
 union of two lines and $\overline X_2=\mathbb F_0$, so
 $\overline X_2$ is simply connected in both cases and, arguing as in the
 proof of Theorem~\ref{topology}, $X$ is also simply connected.

\smallskip
We deal now with the analogue of Theorem~\ref{regularity}
(1). If $Y$ is $\PPP^n$ or a smooth hyperquadric in $\PPP^{n+1}$,
the same arguments of the proof of Theorem~\ref{regularity} work.
Now let $Y$ be a smooth rational normal scrol. Using the same notation
as there, the analogue
of \eqref{equation.regularity} yields
\begin{equation*}
 \mathcal O_Y(K_Y) \otimes \mathcal L= \mathcal O_Y(s),
\end{equation*}
so
\begin{equation*}
 h^i(\mathcal L^{-1})=h^{n-i}(\mathcal O_Y(s)).
\end{equation*}
If $s <0$, then $h^{n-i}(\mathcal O_Y(s))=0$ by Serre duality and the
Kodaira vanishing theorem. If  $s \geq 0$, then
\begin{equation*}
h^{n-i}(\mathcal O_Y(s))=h^{n-i}(S^s(\mathcal E))=0.
\end{equation*}
\end{proof}

\begin{rem}
 {\rm Because of the analogue of Theorem~\ref{regular},
 hyperelliptic, $0$--subcanonical polarized varieties
 with canonical singularities are Calabi--Yau.}
\end{rem}

\end{document}